\date{\today}
\newcommand{\Z}{{\mathbb Z}}
\newcommand{\R}{{\mathbb R}}
\newcommand{\be}{\begin{equation}}
\newcommand{\ee}{\end{equation}}
\newcommand{\ol}{\overline}
\newtheorem{theorem}{Theorem} [section]
\newtheorem{lemma}[theorem]{Lemma}
\newtheorem{definition}[theorem]{Definition}
\numberwithin{equation}{section}
\begin{document}

\title{SOME SPECTRUM  PROPERTY  OF  PERIODIC  COUPLING  AMO  OPERATOR}

\author[X.\ Xia]{Xu Xia}

\author[ZuoHuan  Zheng ]{ZuoHuan  Zheng}

\address{University  of Chinese  Academy of Sciences ，Beijing 100049,China}

\address{Academy of Mathematics and Systems Sciences  ,Chinese Academy of Sciences ，Beijing  100190 ,China}
\email{\href{mailto:helge.krueger@rice.edu}{xiaxu14@mails.ucas.ac.cn}}

\address{University  of Chinese  Academy of Sciences ，Beijing 100049,China}

\address{Academy of Mathematics and Systems Sciences  ,Chinese Academy of Sciences ，Beijing  100190 ,China}
\email{\href{mailto:helge.krueger@rice.edu}{zhzheng@amt.ac.cn}}

\thanks{X.\ X.\ was financial supported by NSF of China(No.11671382),CAS Key Project of Frontier  Sciences(No.QYZDJ-SSW-JSC003),the Key Lab of Random  Complex  Structures and  Data Sciences CAS and National Center for   Mathematics  and  Interdisplinary  Sciences  CAS.}
\thanks{Z.\ Z.\ was financial supported by NSF of China(No.11671382),CAS Key Project of Frontier  Sciences(No.QYZDJ-SSW-JSC003),the Key Lab of Random  Complex  Structures and  Data Sciences CAS and National Center for   Mathematics  and  Interdisplinary  Sciences  CAS.}

\date{\today}

\keywords{Schr\"odinger operator, Lyapunov exponent, limit-periodic potential}
\subjclass[2000]{34D08; 39A70, 47A10}

\begin{abstract}
We study spectrum of the periodic coupling  AMO  model. Meantime there establish
the continuity of  Lyapunov exponent about the the periodic coupling of AMO  model. Through the dynamical method  can find a interval the AMO model only have  absolutely continuous spectrum. At the same time,
 some condition make the periodic coupling of AMO  model  is  singular continuous.
\end{abstract}

\maketitle

\section{Introduction}

In this paper, we construct examples of ergodic Schr\"odinger operators
$H_{\omega}$, whose Lyapunov exponent $L(E)$ is continuous and have some intrigue property.
Since  Anderson \cite{anderson1958absence}introduce that  absence of diffusion in certain random lattices, random
can make the model localization, and AMO model is determine in some sense, so if want to  get more random in the AMO model is consider more demension.

Since E.~Dinaburg and Y.~G. Sinai\cite{dinaburg1975one} use KAM theory to prove the reducible of cocycle, many absolutely continuous spectrum's existence have be proved. Eliasson\cite{eliasson1992floquet}  developed   the method  and prove that full measure of the reducibility. So when get the Lyapunov exponent vanishes  in somewhere, through the KAM theory can prove purely absolutely continuous spectrum's existence  in a internal about zero.

Through the Herman\cite{herman1983methode} method can prove  Lyapunov exponent  is positive, so can get a condition about purely singular continuous spectrum under some condition.

The method is Gordon\cite{gordon1976point} method, many place  use it to  prove can not exist point spectrum. The power of Gordon method  is very strong, when talk about continuous potential, can get the generic singular continuous spectrum.

Some author such as Simon, Jitomirskaya consider the transition of pure point spectrum and absolutely continuous spectrum of AMO model. At  the last forty   years, many people  consider the coupling is constant and study the famous conjecture
'Ten Martini Problem ' and 'Dry Ten Martini Problem '. Recetently the 'Ten Martini Problem' was solved by Avila and Jitomirskaya\cite{avila2009ten}. And recently the "Dry Ten Martini Problem " was partly solved by  Avila, You and Zhou \cite{avila2009ten11}. The above authors  always consider the coupling is constant.  What property about the coupling is  not a constant in the AMO model?  There is only a few results. In this paper we consider the coupling is not a constant in the AMO model, concrete to study the periodic couplings in the AMO model.

In physical the coupling and potential function determine present the magnetic, like  in the Anderson model. If  consider it from the coupling, we can think the coupling is iid and the potential function is identity, and no trival iid make the spectrum is Anderson localization then the material is not conductive. If in the AMO model the coupling is constant but if the absolutely of coupling is large than one, then under  some condition the material is not conductive. And when the absolutely of coupling is large than one is little than one but not zero,   the material will can not be conductive. This show that  the coupling is very important in the conductive of the material.
But the magnetic have many form, so the coupling maybe  not  is  a constant. There we will consider the non-constant coupling, concrete we study the periodic of the AMO model.

Since  1980 many author study sch\"odinger operator through a family of ergodic opertor, such as   Simon \cite{simon1982almost}, Sinai\cite{sinai1987anderson}, Avron and Simon\cite{avron1982singular}. The famous model AMO is define in a irrational rotation on circle. Throgh the frame of  strict ergodicity and transformation of the circle  can get many information about the spectrum. So in order to study the spectrum of the periodic coupling of AMO  model  we need to establish the dynamical  system. we get it in the second section.

Through the dynamical method and Lyapunov exponent there will be get some interesting spectrum  property of the   periodic coupling  AMO  model.
Find a property of that  periodic coupling  AMO  model has purely  absolutely  continuous spectrum on the neighborhood of zero (Theorem\ref{11}). Meantime give some singular  continuous spectrum  of periodic coupling  AMO  model.

In section  \ref{n0} show the motivation to consider the   periodic coupling  AMO  model, and give the definition about the Lyapunov exponent.
In section  \ref{n1}   give the continuity of  Lyapunov exponent of periodic coupling  AMO  model. It is important about to  use the Kotani theory.
In section \ref{n2} give the spectral property of a class of periodic coupling  AMO  model.
In section \ref{n3}   Find a property of that  periodic coupling  AMO  model has purely  absolutely  continuous spectrum on the neighborhood of zero.
In section \ref{n4}   studying a special case of periodic coupling  AMO  model, which give the connection with the classical AMO model.
In section \ref{n5}  give some singular  continuous spectrum  of periodic coupling  AMO  model.

\section{preliminaries}\label{n0}
First we consider the    periodic coupling  AMO  model is not want to get a extensions about classical AMO  model, but want get some spectrum property  of the  2-dimension AMO model.
In addition to being a result of interest of its own, we were motivated
by the following observation.
Studying how to get the Anderson-location in the 2-dimension,  this is the case when  I  consider the schodinger operator:
$$ H:L^2 (Z)\rightarrow L^2 (Z),$$
satisfy
$$(H\phi)_n=\phi_{n+1}+\phi_{n-1}+\lambda V(n\omega+\theta)\phi_n$$
$$\omega=(\omega_1,\omega_2)\in T^1\times T^1$$
$$\forall \phi\in L^2(Z).$$
We want to search the spectral property in  some sense  2-demention,  chose
$$V(n\omega+\theta)=\lambda(\cos(\theta+n\omega_1))+\beta(\cos(\theta+n\omega_2)),$$
the spectrum of  model equation becomes
$$\phi_{n+1}+\phi_{n-1}+\lambda(\cos(\theta+n\omega_1))+\beta(\cos(\theta+n\omega_2))
\phi_n=E \phi_n.$$

For simple  chose $$\lambda=\beta,$$
then
$$\lambda(\cos(\theta+n\omega_1))+\lambda(\cos(\theta+n\omega_2))=2 \lambda\cos(\theta+\frac{n}{2}(\omega_1+\omega_2))\cos(\frac{n}{2}(\omega_1-\omega_2)).$$

For simple there chose $$\frac{n}{2}(\omega_1-\omega_2)=\frac{p}{q}2\pi,$$
$p,q$ is co-prime and $p\in N,q\in Z$.
So we reduce the problem to the potential $v$ is a $$qusiperiodic \times periodic .$$
So for generally  chose $$V(n\omega+\theta)=\lambda\cos(n\omega+\theta)\times T(n),$$
where exist $k\in N$
$$T(n+k)=T(n)$$
$$\forall n\in Z.$$
So, if we know
$$
T(0),T(1),...,T(k-1),
$$
can get the potential. Through the Herman method there will can get some information about the positive of Lyapunov exponent.

We can get the transfer matrix through the above argument
\begin{equation}
\begin{split}
 A^{n}(\theta) & =\left(
            \begin{array}{cc}
              E-V(n\omega+\theta) & -1 \\
              1 & 0 \\
            \end{array}
          \right) \\
    &=\left(
             \begin{array}{cc}
               E-\lambda\cos(n\omega+\theta)\times T(n) & -1 \\
               1 & 0\\
             \end{array}
           \right),
\end{split}
\end{equation}
when  $$T=constant.$$ Then the model become  AMO  model
so we really consider bigger than AMO model.

Define
$$A_{n}(\theta)=\prod_{k=0}^{n-1}A^{k}(\theta) ;$$

$$ L(E)=\liminf_{n\rightarrow\infty}\int_{0}^{2\pi}\frac{\log|A_{n}(\theta)|}{n}d\theta.$$

Use the Herman  method   achieve that
\begin{equation}
\begin{split}
 A^{n}(\theta) &
          =\left(
             \begin{array}{cc}
               E-\lambda\cos(n\omega+\theta)\times T(n) & -1 \\
               1 & 0\\
             \end{array}
             \right) \\
    &  =\left(
             \begin{array}{cc}
               E-\frac{\lambda}{2}(e^i{(n\omega+\theta)\times T(n)}+e^{-i{(n\omega+\theta)\times T(n)}}) & -1 \\
               1 & 0\\
             \end{array}
             \right).
\end{split}
\end{equation}

Let$$e^{i(n\omega+\theta)}=z,$$
Then
 \begin{equation}
 \begin{split}
 A^{n}(z) &  =\left(
             \begin{array}{cc}
               E-\frac{\lambda}{2}(z\times T(n)+\frac{1}{z}\times T(n)) & -1 \\
               1 & 0\\
             \end{array}
             \right) \\
     &
             =\frac{1}{z}\left(
             \begin{array}{cc}
               E z-\frac{\lambda}{2}(z^2\times T(n)+1\times T(n)) & -z \\
              z & 0\\
             \end{array}
             \right);
 \end{split}
\end{equation}

\begin{equation*}
\begin{split}
 L(E) & =\liminf_{n\rightarrow\infty}\int_{0}^{2\pi}\frac{\log|A_{n}(\theta)|}{n}d\theta \\
    &  =\liminf_{n\rightarrow\infty}\int_{|z|=1}\frac{\log|A_{n}(z)|}{n}\frac{dz}{z}\\
    &\geq\int_{|z|=0}\frac{\log|A_{n}(0)|}{n}|dz|\\
    &= \log{\|\frac{\lambda}{2}|\sqrt[k]{T(1)\times T(2)...\times T(k)}}.
\end{split}
\end{equation*}

Establish $$ L(E)>0,$$ when
$$\|\frac{\lambda}{2}|\sqrt[k]{T(1)\times T(2)...\times T(k)}>1.
$$
so can get that the Lyapunov exponent is positive in some place.
It is interesting to search   the $$
\{T(0),T(1),...,T(k-1)\},
$$ have some one is zero, what happended  about  spectrum. Consider a special situation,
the  number of  set$$
\{T(0),T(1),...,T(k-1)\},
$$  is $2$.
And  $$
\{T(0), T(1)\},
$$ is
 $$
\{0, T(1)\},
$$
then $E=0$ is the Lyapunov exponent vanishes, no matter how large the  value of $T(1)$.

We will begin to discuss some basics dynamical systems
about  the periodic coupling of AMO  model.

Given a bounded sequence $V: \Z \to \R$, we denote by
$\Omega_V$ the hull of its translates. That is
\be
 \Omega_V = \ol{\{V_m,\quad m \in \Z\}}^{\ell^{\infty}(\Z)},
\ee
where $V_m(n) = V(n - m)$. If $\Omega_V$ is compact
in the $\ell^{\infty}(\Z)$ topology, then $V$ is
called {\em almost-periodic}.
The shift map on $\ell^\infty(\Z)$ becomes a translation
on the group $\Omega_V$ and it is uniquely
ergodic with respect to the Haar measure of $\Omega_V$.  Embedding a sch\"odinger operator in a
suit family is the main sutdy  method. This method get a great progress  in study {\em limit-periodic} by
Avila\cite{avila2009spectrum},he give a suit dynamical system on a contour group.
This re
$V$ is called {\em limit-periodic}, if there exists a sequence
of periodic potentials $V^k$ such that
\be
 V = \lim_{k \to \infty} V^k
\ee
in the $\ell^\infty(\Z)$ topology. It should be remarked
that limit-periodic $V$ are almost-periodic. In fact, then
$\Omega_V$ has the extra structure of being a Cantor group.
Then Avila confirm that limit-periodic one-to-one correspond a minimal transform Cantor group.

So use   the similar method to study the  periodic coupling  AMO  model.
The dyanmical system in study the periodic coupling  AMO  model is define in a compact metric space.

Let $$M=S^{1}\times \{
0,1,...,k-1
\}.$$
The hemeomorphism is
$$
T:M\rightarrow M,
$$

$$
T(\theta,h)=(\theta+\omega,h+1),
$$
and when
\begin{equation*}
h=k-1,
\end{equation*}
then
\begin{equation*}
h+1=0.
\end{equation*}

eg: the function in $$\{
0,1,...,k-1
\}$$  is cyclic group.

The topology in $M$ is product  topology, and we chose the topology in $\{
0,1,...,k-1
\}$  is discrete topology, the topology of $S^{1}$ is the general topology  induce by the metric.

The metric in  $S^{1}$
is
$$
d(\theta_1,\theta_2)=min(|\theta_1-\theta_2|,2\pi-|\theta_1-\theta_2|).
$$
Then  we can get the dynamical  system $(M,T)$ is strict ergodicity and transformation of the space$M$ when the number $\omega/{2\pi}$ is irrational.
This result is prove by many author, such as a book of F. R. Hertz, J. R. Hertz and R. Ures\cite{hertz2011partially}, and the unique probability mesaure is that:
$$d\mu=\frac{\frac{d\theta}{{2\pi}}\times\{\delta_{0}+\delta_{1}+...+\delta_{k-1}\}}{k}.$$

So can study the periodic coupling  AMO  model through the base dynamical system.

Since the base dynamical system is strict ergodicity, so the Lyapunov exponent can give  many information about  the periodic coupling  AMO  model.

First give the Lyapunov exponent in the model
let
\begin{equation*}
\forall \theta\in S^1,n\in \{
0,1,...,k-1
\},A(\theta,n)=\left(
             \begin{array}{cc}
               E-\lambda\cos(\theta)\times T(n) & -1 \\
               1 & 0\\
             \end{array}
           \right),
\end{equation*}

\begin{equation*}
\begin{split}
  A^{m}(\theta,n) &=\left(
             \begin{array}{cc}
               E-\lambda\cos(\theta+(m-1)\omega)\times T(n+m-1) & -1 \\
               1 & 0\\
             \end{array}
           \right) \cdots\\
            &\cdots\left(
             \begin{array}{cc}
               E-\lambda\cos(\theta+\omega)\times T(n+1) & -1 \\
               1 & 0\\
             \end{array}
           \right)
           \left(
             \begin{array}{cc}
               E-\lambda\cos(\theta)\times T(n) & -1 \\
               1 & 0\\
             \end{array}
           \right).
\end{split}
\end{equation*}

\begin{definition}
 Let $$L_m(E)=\int_{M} A^{m}(\theta,n)d\mu,$$ then through the subadditive ergodic theorem\cite{kingman1973subadditive} know that
 $$\lim_{m\rightarrow\infty}L_m(E),$$ is exist and $$\lim_{m\rightarrow\infty}L_m(E) =\inf_{m\geq 1} L_m(E),$$
 then $$L(E)=\lim_{m\rightarrow\infty}L_m(E) =\inf_{m\geq 1} L_m(E).$$
\end{definition}
The study  continuity of  Lyapunov exponent of periodic coupling  AMO  model is very important, through the Kotani theory\cite{kotani1986lyapunov}, if know the continuity of  Lyapunov exponent of periodic coupling  AMO  model, then the absolute continuous spectrum is in the essential closed of the  set $\{E|L(E)=0\}$. But if  Lyapunov exponent of periodic coupling  AMO  model is continuous, then set $\{E|L(E)=0\}$ is closed. So the essential closed of the  set $\{E|L(E)=0\}$ in $\{E|L(E)=0\}$. The continuity of  Lyapunov exponent  is not always can be  established, so the absolute continuous spectrum may have some support in the set$\{E|L(E)> 0\}$. This argument maybe construct many example to get discontinuous  Lyapunov exponent, such as Z.~Gan and H.~Krueger\cite{gan2010discontinuity}.

But the most important is that maybe get some $E$ in the  absolute continuous spectrum   meantime   in the  set $\{E|L(E)\geq0\}$, this maybe give a negative answer  about the "sch\"odinger conjecture". Avial\cite{avila2015kotani} give a counterexample  about the "sch\"odinger conjecture", his method is subtle, I think this argument can give a simple example. This problem will be done in the future.

\section{ continuity of  Lyapunov exponent of periodic coupling  AMO  model}\label{n1}
The study continuity of  Lyapunov exponent   AMO  model has long history, the first important result in this line is that J Bourgain's theorem:
\begin{theorem}\label{2}\cite{bourgain2002continuity}
The Lyapunov exponent $$L(\beta+.,.) : S^1 \times B^\omega(S^1, SL(2,R))\rightarrow R $$is
jointly continuous at every irrational $\beta$.
\end{theorem}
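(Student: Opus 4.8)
The plan is to prove this as a statement about \emph{uniform convergence of the finite-scale Lyapunov exponents}, using the subharmonicity of the complexified cocycle together with the avalanche principle of Goldstein--Schlag. First I would complexify: since every $A$ in $B^\omega(S^1, SL(2,\R))$ extends holomorphically to a fixed annulus $\{|\im z| < \rho\}$, the quantities $u_n(x) = \frac{1}{n}\log\|A_n(x)\|$, where $A_n(x) = A(x+(n-1)\om)\cdots A(x)$, extend to subharmonic functions of the complexified variable on that annulus. Writing $L_n(\om, A) = \int_{S^1} u_n \, dx$, one has $L(\om,A) = \lim_n L_n = \inf_n L_n$, so the theorem reduces to showing this convergence is locally uniform in $(\om,A)$ near any irrational $\beta$. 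Note that each $L_n$ is jointly continuous, so $L = \inf_n L_n$ is automatically upper semicontinuous; the real content is the matching lower bound, i.e. a rate of convergence.

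The central analytic input is a \emph{large deviation theorem}. Using that $u_n$ is subharmonic with uniformly bounded $L^1$ norm on the annulus, its Fourier coefficients decay, whence $u_n$ has controlled bounded mean oscillation. Combined with the irrationality of the frequency (exploited through the continued-fraction denominators $q_k$ of $\beta$, so that orbit segments $\{x + j\om\}$ equidistribute at the appropriate scales), this yields an estimate of the form
\[
 \bigl|\{x \in S^1 : |u_n(x) - L_n(\om,A)| > \eps_n\}\bigr| < \E^{-c\, n\, \eps_n},
\]
valid for $n$ in a suitable range of scales and for $(\om,A)$ in a neighborhood of $(\beta, A_0)$.

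Next I would invoke the \emph{avalanche principle}: for a long product of $SL(2,\R)$ matrices that are individually large and exhibit no cancellation between consecutive factors, $\log\|A_n\cdots A_1\|$ is well approximated by $\sum \log\|A_{j+1}A_j\| - \sum \log\|A_j\|$. Feeding the large deviation estimate into the avalanche principle at scale $n$ lets one block the product $A_{2n}$ into blocks of length $n$ and derive the approximate recursion
\[
 |L(\om,A) + L_n(\om,A) - 2 L_{2n}(\om,A)| \le C\,\E^{-c\, n\, \eps_n},
\]
which telescopes to give $|L_n(\om,A) - L(\om,A)| \le C/n$ with the constant locally uniform in $(\om,A)$. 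Since each $L_n$ is continuous and the convergence $L_n \to L$ is then uniform, $L$ is continuous at $(\beta, A_0)$.

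The hard part is the large deviation estimate at an \emph{arbitrary} irrational $\beta$, with no Diophantine hypothesis. For Diophantine frequencies the equidistribution of $\{x+j\om\}$ is quantitative and the argument is standard; for Liouville $\beta$ one must instead choose the scale $n$ adapted to the continued-fraction denominators $q_k$, running the estimate only on the arithmetically good scales where the orbit is nearly equidistributed, and controlling the transition between consecutive scales $q_k$ and $q_{k+1}$ via the near-periodicity of the cocycle. Ensuring this choice of scales can be made uniformly for $\om$ in a neighborhood of $\beta$ (where the convergents $q_k$ of nearby frequencies agree up to an index that grows as one shrinks the neighborhood) is the delicate point on which the \emph{joint} continuity, as opposed to mere fiberwise continuity, ultimately rests.
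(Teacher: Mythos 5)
The paper does not prove this statement at all: Theorem \ref{2} is imported verbatim from Bourgain--Jitomirskaya \cite{bourgain2002continuity} (with the extension to general $SL(2,\R)$ cocycles attributed to \cite{jitomirskaya2009continuity}), and is then used as a black box to deduce continuity of the Lyapunov exponent for the periodic-coupling model after reducing to the $k$-step cocycle $B(\theta)=A^k(\theta,0)$ over the rotation by $k\omega$. So there is no in-paper proof to compare against; what you have written is, in outline, the actual published argument: subharmonic extension of $u_n$, a large deviation theorem whose only arithmetic input is the continued-fraction structure of $\beta$ (no Diophantine condition), the Goldstein--Schlag avalanche principle, and the telescoped recursion $|L+L_n-2L_{2n}|\le C\E^{-cn\eps_n}$ giving a locally uniform rate $|L_n-L|\le C/n$, hence continuity of $L=\inf_n L_n$ beyond the automatic upper semicontinuity. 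Your identification of the Liouville case and the stability of convergents under perturbation of $\omega$ as the crux of \emph{joint} continuity is also accurate.

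One genuine gap in your outline: the avalanche principle requires the individual blocks to have uniformly large norm, i.e.\ $\min_j\|A_j\|\ge\mu$ with $\mu$ large compared to the failure of cancellation, and this hypothesis is only available when the finite-scale exponents are bounded below by a positive constant. Your scheme therefore only runs at parameters where $L(\beta,A_0)>0$, and you must treat the complementary case separately. That case is easy but has to be said: for $SL(2,\R)$ cocycles $L\ge 0$ everywhere, and $L=\inf_n L_n$ is upper semicontinuous as you note, so at any point with $L(\beta,A_0)=0$ one has $0\le\liminf L\le\limsup L\le L(\beta,A_0)=0$ and continuity is immediate. Without this dichotomy the recursion step is vacuous precisely where the avalanche principle's hypotheses fail. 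You should also make explicit that the induction needs a base scale $n_0$ (depending on the lower bound for $L$ and on the neighborhood of $\beta$) at which both the large deviation estimate and the norm lower bound $\|A_{n_0}(x)\|\ge\E^{n_0 L/2}$ off a small set are simultaneously verified; for Liouville $\beta$ this forces the induction to propagate only along the sparse sequence of good scales tied to the $q_k$, which is consistent with, but not automatic from, what you wrote.
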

 In \cite{bourgain2002continuity}, Theorem \ref{2} was stated and proven for the Schrödinger, SL(2,R) case; strictly speaking, the  extension to SL(2,R)follows from \cite{jitomirskaya2009continuity}. Many generalization to non-singular and singular cocycles has been carried  out explicitly by\cite{jitomirskaya2012analytic}\cite{jitomirskaya2011continuity}.Many other line about the continuity of  Lyapunov exponent of different model, such as Duarte and  Klein\cite{duarte2016continuity} who prove many continuity of  Lyapunov exponent about general cocycle.
 But for prove the continuity of  Lyapunov exponent of periodic coupling  AMO  model, the theorem\ref{2} of  J Bourgain is enough. Meantime the method of  J Bourgain  can be use to prove the positive in many place. But in this paper only use the Herman and Avila's method to prove the positive.
\begin{theorem}
Given a  periodic coupling  AMO  model $(M,T)$, if the frequency $\omega$ is irrational, then the Lyapunov exponent is continuous in the real number.
\end{theorem}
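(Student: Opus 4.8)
The plan is to reduce the Lyapunov exponent of the skew-product cocycle over $(M,T)$ to that of an analytic $SL(2,\R)$-cocycle over a single irrational rotation of $S^1$, and then quote Bourgain's joint continuity result, Theorem \ref{2}. Since each one-step transfer matrix $A(\theta,n)$ has determinant $E\cdot 0-(-1)\cdot 1=1$, it lies in $SL(2,\R)$; because $T(\cdot)$ has period $k$ and the base dynamics satisfies $T^{k}(\theta,h)=(\theta+k\om,h)$, I would form, for each fixed fibre $h\in\{0,\dots,k-1\}$, the $k$-step matrix
$$
 B_h(\theta)=A\big(\theta+(k-1)\om,h+k-1\big)\cdots A(\theta+\om,h+1)\,A(\theta,h).
$$
A direct telescoping, using the periodicity $T(n+k)=T(n)$, shows that
$$
 A^{kN}(\theta,h)=B_h\big(\theta+(N-1)k\om\big)\cdots B_h(\theta+k\om)\,B_h(\theta),
$$
so that the cocycle $(R_{k\om},B_h)$, with $R_{k\om}\theta=\theta+k\om$, is exactly the one obtained by iterating $(T,A)$ in blocks of length $k$ along the fibre $h$.

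Next I would identify the two Lyapunov exponents. Using the explicit ergodic measure $d\mu=\frac1k\frac{d\theta}{2\pi}\times(\delta_{0}+\cdots+\delta_{k-1})$ from Section \ref{n0} and the unique ergodicity of $(M,T)$ for irrational $\om/2\pi$, the subadditive ergodic theorem gives $\frac1N\log\|B_h^{(N)}(\theta)\|\to kL(E)$ for a.e.\ $\theta$ and \emph{every} $h$, so the $h$-dependence drops out. Integrating and comparing with the defining average $L(E)=\lim_m \frac1m\int_M\log\|A^m\|\,d\mu$ yields
$$
 L(E)=\tfrac1k\,L_{B_h}(k\om),
$$
where $L_{B_h}(k\om)$ is the Lyapunov exponent of the analytic cocycle $(R_{k\om},B_h)$ over $S^1$. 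Since $\om/2\pi$ irrational forces $k\om/2\pi$ irrational, the base rotation $R_{k\om}$ is irrational and Theorem \ref{2} applies at $\beta=k\om$.

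I would then verify the two hypotheses needed to invoke Theorem \ref{2}. First, each $B_h$ is an analytic $SL(2,\R)$-valued map on $S^1$: its entries are finite sums and products of the entire-in-$E$, analytic-in-$\theta$ entries of the $A(\cdot,\cdot)$, so $B_h$ extends holomorphically to a fixed strip $\{|\im\theta|<\rho\}$ and belongs to $B^{\om}(S^1,SL(2,\R))$. Second, the map $E\mapsto B_h$ is continuous (indeed real-analytic) from $\R$ into this Banach space of bounded analytic maps: each entry depends polynomially on $E$, and on a fixed strip the product depends continuously on $E$ in the supremum norm. Composing this continuous assignment with the jointly continuous Lyapunov exponent of Theorem \ref{2} shows that $E\mapsto L_{B_h}(k\om)$ is continuous, whence $L(E)=\frac1k L_{B_h}(k\om)$ is continuous on $\R$.

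The main obstacle is this second hypothesis check: one must confirm that the finite product $B_h$, built from matrices whose entries involve $\cos(\theta+j\om)$, admits a bounded holomorphic extension to a strip that is uniform on compact $E$-sets, and that $E\mapsto B_h$ is continuous in the strip-supremum norm demanded by Bourgain's framework, not merely in a weaker pointwise topology. Once $B_h$ is placed in $B^{\om}(S^1,SL(2,\R))$ with continuous $E$-dependence, the continuity of $L(E)$ is immediate from Theorem \ref{2}; the identity $L(E)=\frac1k L_{B_h}(k\om)$ and its independence of $h$, which both rest on the unique ergodicity established in Section \ref{n0}, are the only remaining points requiring care.
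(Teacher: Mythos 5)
Your proof follows essentially the same route as the paper's: both reduce the cocycle over $(M,T)$ to the $k$-step analytic $SL(2,\R)$-cocycle $B(\theta)=A^{k}(\theta,0)$ over the irrational rotation $\theta\mapsto\theta+k\omega$ and then invoke Bourgain's continuity theorem (Theorem~\ref{2}). Your version is in fact more careful than the paper's, since you track the normalization $L(E)=\tfrac1k L_{B_h}(k\omega)$, the independence of the fibre $h$, and the continuity of $E\mapsto B_h$ in the analytic norm, all of which the paper leaves implicit.
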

\begin{proof}
Since  the dynamical  system $(M,T)$ is strict ergodicity and transformation of the space$M$ when the number $\omega/{2\pi}$ is irrational.
 the unique probability mesaure is that:
$$d\mu=\frac{\frac{d\theta}{{2\pi}}\times\{\delta_{0}+\delta_{1}+...+\delta_{k-1}\}}{k},$$
and
$$L(E)=\lim_{m\rightarrow\infty}L_m(E) =\inf_{m\geq 1} L_m(E).$$
Set $B(\theta)=A^{k}(\theta,0)$, then
let$$\bar{L}_m(E)=\int_{M}\frac{||B^m(\theta)||}{m}\frac{\frac{d\theta}{{2\pi}}}{k},$$
$$L(E)=\lim_{m\rightarrow\infty}\bar{L}_m(E) =\inf_{m\geq 1} \bar{L}_m(E).$$
Since the dynamical  system $(M,T)$ is strict ergodicity and transformation of the space  $M$  when the number $\omega/{2\pi}$ is irrational, the equation Set up.
If the  Lyapunov exponent  is defined by $$L(E)=\lim_{m\rightarrow\infty}\bar{L}_m(E) =\inf_{m\geq 1} \bar{L}_m(E).$$ Then the Lyapunov exponent can be reduced to a simple dynamical systems.
The new dynamical system is :
$$\bar{T}:S^1\rightarrow S^1;$$
$$\bar{T}(\theta)=\theta+k\omega;$$
$$B(\theta)=A^{k}(\theta,0).$$
Then this is a coycle on circle.
Thorough the definition of  $$B(\theta)=A^{k}(\theta,0),$$find that
\begin{equation*}
\begin{split}
 B(\theta)&=A^{k}(\theta,0) \\
    & =\left(
             \begin{array}{cc}
               E-\lambda\cos(\theta+(m-1)\omega)\times T(k-1) & -1 \\
               1 & 0\\
             \end{array}
           \right)
           \cdots\\
           &\cdots
           \left(
             \begin{array}{cc}
               E-\lambda\cos(\theta+\omega)\times T(1) & -1 \\
               1 & 0\\
             \end{array}
           \right)
           \left(
             \begin{array}{cc}
               E-\lambda\cos(\theta)\times T(0) & -1 \\
               1 & 0\\
             \end{array}
           \right).
\end{split}
\end{equation*}
So $$B(\theta)=A^{k}(\theta,0).$$
is analytical  about the variant $\theta\in S^1.$
and Then continuity of  Lyapunov exponent of periodic coupling  AMO  model reduced to the continuity of  Lyapunov exponent of  a simple dynamical systems$(S^1,\bar{T}).$
So $(B,T,S^1) \in S^1 \times B^\omega(S^1, SL(2,R))$, so use the theorem \ref{2}the Lyapunov exponent is continuous in the model $(B,T,S^1)$, and because periodic coupling  AMO  model reduced to the continuity of  Lyapunov exponent of  a simple dynamical systems$(S^1,\bar{T})$, so  Lyapunov exponent of periodic coupling  AMO  model is continuous.

\end{proof}

\section{ spectral property of a class of periodic coupling  AMO  model}\label{n2}

If we let the  number of  set$$
\{T(0),T(1),...,T(k-1)\}
$$  is $2.$

That is to say$$
\{T(0),T(1),...,T(k-1)\}
=
 \{T(0),T(1)\}.
$$
Then  the  model $(B,T,S^1)$  have some  simple form:
\begin{equation}
B(\theta)=\left(
\begin{array}{cc}
 (\cos (\theta ) T(0)-E) (\cos (\theta +\omega ) T(1)-E)-1 & E-\cos (\theta +\omega ) T(1) \\
 \cos (\theta ) T(0)-E & -1 \\
\end{array}
\right).
\end{equation}

\begin{theorem}
  Lyapunov exponent of periodic coupling  AMO  model,$L(0)=0,$when $T(0)=0,$ no matter what the value of $T(1)$.
\end{theorem}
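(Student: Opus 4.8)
The plan is to exploit the fact that setting $E=0$ together with $T(0)=0$ collapses the reduced cocycle $B(\theta)$ into an explicitly integrable, essentially unipotent form, so that the entire product $B^m(\theta)$ can be written in closed form. First I would substitute $E=0$ and $T(0)=0$ into
\begin{equation*}
B(\theta)=\left(
\begin{array}{cc}
 (\cos (\theta ) T(0)-E) (\cos (\theta +\omega ) T(1)-E)-1 & E-\cos (\theta +\omega ) T(1) \\
 \cos (\theta ) T(0)-E & -1 \\
\end{array}
\right),
\end{equation*}
which reduces it to the upper triangular matrix
\begin{equation*}
B(\theta)=\left(
\begin{array}{cc}
 -1 & -T(1)\cos(\theta+\omega) \\
 0 & -1 \\
\end{array}
\right)=-\left(
\begin{array}{cc}
 1 & T(1)\cos(\theta+\omega) \\
 0 & 1 \\
\end{array}
\right).
\end{equation*}
The crucial structural observation is that this is the scalar $-1$ times a unipotent upper triangular matrix, and that unipotent upper triangular matrices form an abelian group under multiplication in which products simply \emph{add} the off-diagonal entries.

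Second, I would iterate along the reduced base dynamics $\bar{T}(\theta)=\theta+k\omega$. Since the $j$-th factor is $B(\bar{T}^{j}\theta)$, whose off-diagonal entry is $T(1)\cos(\theta+\omega+jk\omega)$, the product telescopes:
\begin{equation*}
B^m(\theta)=(-1)^m\left(
\begin{array}{cc}
 1 & c_m(\theta) \\
 0 & 1 \\
\end{array}
\right),\qquad c_m(\theta)=T(1)\sum_{j=0}^{m-1}\cos(\theta+\omega+jk\omega).
\end{equation*}
Thus the whole growth of the cocycle is concentrated in the single scalar cosine sum $c_m(\theta)$, and $\|B^m(\theta)\|$ is comparable to $\max(1,|c_m(\theta)|)$.

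Third, and this is the decisive step, I would bound $c_m(\theta)$ uniformly in $m$. Writing the cosine sum as the real part of a geometric series in $e^{\I k\omega}$ yields
\begin{equation*}
\left|\sum_{j=0}^{m-1}\cos(\theta+\omega+jk\omega)\right|\le \frac{1}{|\sin(k\omega/2)|},
\end{equation*}
and the denominator is nonzero precisely because $\omega/2\pi$ is irrational, so $k\omega\notin 2\pi\Z$ and $e^{\I k\omega}\neq 1$. Hence $|c_m(\theta)|$, and therefore $\|B^m(\theta)\|$, is bounded by a constant independent of $m$ and $\theta$. Finally, since $\|B^m(\theta)\|$ is uniformly bounded, $\tfrac{1}{m}\log\|B^m(\theta)\|\to 0$ boundedly, and the definition of the Lyapunov exponent through the reduced system $(S^1,\bar{T})$ gives $L(0)=0$, for every value of $T(1)$.

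The main obstacle is the third step: one must invoke the irrationality of $\omega/2\pi$ to keep the off-diagonal cosine sum genuinely \emph{bounded}, not merely subexponential; if $k\omega$ were a multiple of $2\pi$ the geometric series would resonate and $c_m(\theta)$ could grow linearly, so the telescoping estimate would have to be replaced by a more careful averaging argument. Everything else is a direct computation once the $E=0$, $T(0)=0$ specialization has trivialized the matrix to upper triangular form.
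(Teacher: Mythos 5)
Your proposal is correct, and its core structural observation is exactly the one the paper uses: with $T(0)=0$ and $E=0$ the reduced transfer matrix collapses to $-1$ times a unipotent upper triangular matrix, so the cocycle cannot grow exponentially. Where you diverge is in the final estimate. The paper stops at the crude observation that the off-diagonal entries of the product accumulate at most linearly, giving $\|B^m(\theta)\|\le T(1)^2 m+2$ and hence $\tfrac1m\log\|B^m(\theta)\|\to 0$; combined with $\det B=1$ (so $L\ge 0$) this already yields $L(0)=0$. You instead compute the product in closed form, identify the off-diagonal entry as the trigonometric sum $c_m(\theta)=T(1)\sum_{j=0}^{m-1}\cos(\theta+\omega+jk\omega)$, and bound it \emph{uniformly} in $m$ by $|T(1)|/|\sin(k\omega/2)|$ via the geometric series. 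This is a strictly stronger conclusion (the cocycle is bounded, not merely subexponential), and it is a correct and clean argument. One caveat: your closing remark slightly misidentifies what is essential. The irrationality of $\omega/2\pi$ is \emph{not} needed for the theorem — in the resonant case $k\omega\in 2\pi\Z$ the sum $c_m$ grows at worst linearly, and $\tfrac1m\log(Cm)\to 0$ still forces $L(0)=0$. So the "decisive step" is really the unipotent telescoping, which both you and the paper rely on; the uniform bound is a bonus, not a necessity.
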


\begin{proof}
\begin{equation}
B(\theta)=\left(
\begin{array}{cc}
 (\cos (\theta ) T(0)-E) (\cos (\theta +\omega ) T(1)-E)-1 & E-\cos (\theta +\omega ) T(1) \\
 \cos (\theta ) T(0)-E & -1 \\
\end{array}
\right).
\end{equation}
When $T(0)=0$, then
\begin{equation}
B(\theta)=\left(
\begin{array}{cc}
 -E (\cos (\theta +\omega ) T(1)-E)-1 &E-\cos (\theta +\omega ) T(1) \\
 -E & -1 \\
\end{array}
\right).
\end{equation}

When $E=0$,
\begin{equation}
B(\theta)=\left(
\begin{array}{cc}
 -1 & -\cos (\theta +\omega ) T(1) \\
 0 & -1 \\
\end{array}
\right).
\end{equation}
Since $\det B(\theta)=1$ so $L(E)\geq 0$.
 For  calculate the $L(0)$, use the Schmidt  norm to done it.
When $T(0)=0,E=0$, then
\begin{equation*}
\begin{split}
 || B(\theta)|| & =\sqrt{T(1)^2 \cos ^2(\theta +\omega )+2} \\
    & \leq\sqrt{T(1)^2+2},
\end{split}
\end{equation*}

\begin{equation*}
\begin{split}
  || B^m(\theta)|| & \leq\left(T(1)^2 \cos ^2(\theta +\omega )m+2\right)\\
    & \leq\left(T(1)^2 m+2\right).
\end{split}
\end{equation*}

So, can get the lower bound
\begin{equation*}
\begin{split}
 L(E) & =\lim_{m\rightarrow\infty}\int_{S^1}\frac{\ln|| B^m(\theta)||}{m}\frac{d\theta}{2\pi} \\
    & \leq\lim_{m\rightarrow\infty}\int_{S^1}\frac{\ln\left(T(1)^2 m+2\right)}{m}\frac{d\theta}{2\pi}.
\end{split}
\end{equation*}

Since
$$\lim_{m\rightarrow\infty}\frac{\ln\left(T(1)^2 m+2\right)}{m}=0,
$$
 so
$$
\lim_{m\rightarrow\infty}\int_{S^1}\frac{\ln\left(T(1)^2 m+2\right)}{m}\frac{d\theta}{2\pi}=0.
$$
Since $L(E)\geq 0$, can get $L(E)=0$, under the condition $T(0)=0,E=0.$
\end{proof}

Some interesting property can get in this periodic coupling  AMO  model. The classic AMO model is that:
$$ H:L^2 (Z)\rightarrow L^2 (Z),$$
satisfy
$$(H\phi)_n=\phi_{n+1}+\phi_{n-1}+\lambda V(n\omega+\theta)\phi_n.$$
Let $V(x)=\cos(x) $,the spectrum of  model equation becomes
$$\phi_{n+1}+\phi_{n-1}+\lambda \cos(n\omega+\theta)\phi_n=E \phi_n.$$

        And through the transform matrix to study the spectrum.
We know that Bourgain and Jitomirskaya result\cite{bourgain2002continuity} when the frequency $\omega$ is irrational, then $L(E)>0$ in the spectrum, if $|\lambda|>2$ and
$L(E)=0$, if $|\lambda|\leq2$, in the spectrum.

And through many people's work  get the complete  spectral picture, when  $|\lambda|>2$ if the irrational  frequency $\omega$ is well approximate by rational number, such as  Liouville  number ,then there only have the singular continuous spectrum.
when  $|\lambda|>2$ if the irrational  frequency $\omega$ is  diophantine number, then there only have the pure point spectrum.
when  $|\lambda|<2$ if the frequency $\omega$ is irrational, then  there only have absolutely continuous spectrum.   There are many gap in the above  example, use quantity of irrational  frequency $\omega$.
Let $\frac{p_n}{q_n}$ be the continued fraction approximation to $\omega$ and let
\begin{equation*}
    \beta =\limsup_{n\rightarrow\infty}\frac{\ln q_{n+1}}{q_n}.
\end{equation*}
In the paper \cite{avila2017sharp} have prove the conjecture that for alomst Mathieu family,under the condition that localization  for almost $a.e. x$ has been $e^\beta<\lambda$ where $\beta$ is the upper rate of exponential
growth of denominators of the continued fractions approximation to $\alpha$.

So many people wonder if every analytic potential have  these property. Then Avila study the neighborhood of $cos(\theta)$, and  Bjerkl\"ov \cite{bjerklov2006explicit} give explicit examples of arbitrarily large analytic ergodic potentials for which the Schrödinger equation has zero Lyapunov exponent for certain energies on matter what the value of the coupling.

In the two periodic of the  periodic coupling  AMO  model, if one of coupling is zero, then have get the similar property.

We can se some reason about the  property that zero Lyapunov exponent for certain energies if the first coupling is zero no  matter what the value of the second coupling.
\begin{equation}\label{4}
\begin{split}
 A(\theta,0) &=\left(
\begin{array}{cc}
 \cos (\theta ) T(0) & -1 \\
 1 & 0 \\
\end{array}
\right) \\
    & =\left(
\begin{array}{cc}
 0 & -1 \\
 1 & 0 \\
\end{array}
\right),
\end{split}
\end{equation}
\begin{equation}\label{5}
A(\theta+\omega,1)=
\left(
\begin{array}{cc}
 \cos (\theta +\omega ) T(1)-E & -1 \\
 1 & 0 \\
\end{array}
\right),
\end{equation}
\begin{equation}\label{6}
B(\theta)=A(\theta+\omega,1) A(\theta,0),
\end{equation}
then can see that
\begin{equation}\label{7}
\begin{split}
 A(\theta,0)& =
\left(
\begin{array}{cc}
 \cos \left(\frac{\pi }{2}\right) & -\sin \left(\frac{\pi }{2}\right) \\
 \sin \left(\frac{\pi }{2}\right) & \cos \left(\frac{\pi }{2}\right) \\
\end{array}
\right) \\
    & =R_{\frac{\pi}{2}}.
\end{split}
\end{equation}
So $R_{\frac{\pi}{2}}$ can exchange the contraction direction and expand direction, so the zero Lyapunov exponent for certain energies if the first coupling is zero no  matter what the value of the second coupling.
Such mechanism have give by  Furstenberg and Kesten\cite{furstenberg1960products} to and use it to establish  the example to explain his theorem is optimism. Bocker-Neto and Viana\cite{bocker2010continuity} use it to give the example to emphasize  the positive is important in the condition.

The  mechanism of the product the $SL(2,R)$ matrix if one is  $R_{\frac{\pi}{2}}$,then the contraction direction and expand direction can not be distinguished.So the Lyapunov exponent become zero.

So in the next section, we study the absolutely  continuous spectrum on the neighborhood of zero.

\section{absolutely  continuous spectrum on the neighborhood of zero}\label{n3}

Sinai\cite{dinaburg1975one} use KAM theory to prove the reducible of cocycle, many absolutely continuous spectrum's existence. But the notion reducible is  not a open condition.So many time use reducible only can prove the   exist the  absolutely  continuous spectrum, but can not prove the pure absolutely  continuous spectrum in  some place.

 Avila and  Jitomirskaya\cite{avila2010almost} introduce  a new notion almost reducible about the reducible of cocycle. Use this notion  they prove the  pure absolutely  continuous spectrum in AMO model, if the absolutely value of coupling is little than $2$, and frequency is  irrational.

 Resently X Hou, J You use the notion of almost reducibility and the equality of continuous dynamical system and discrete  cocycle they prove the almost reducible of AMO model if frequency is   Liouville number.

In the  progress in reduce a cocycle  we always incounter a small denominators  problem, this produce in look for a answer of coboundary  problem. This always be called the rigidity of irrational rotation.This paper not to get a sharp condition the interval of the exitance of  pure absolutely  continuous spectrum, the usual answer is enough to get a interval of zero.

If we let the  number of  set$$
T(0),T(1),...,T(k-1)
$$  is $2.$

That is to say$$
T(0),T(1),...,T(k-1)
=
 T(0),T(1)
$$
then  the  model $(B,T,S^1)$  have some  simple form:
\begin{equation}
B(\theta)=\left(
\begin{array}{cc}
 (\cos (\theta ) T(0)-E) (\cos (\theta +\omega ) T(1)-E)-1 & E-\cos (\theta +\omega ) T(1) \\
 \cos (\theta ) T(0)-E & -1 \\
\end{array}
\right).
\end{equation}

\begin{theorem}\label{9}
 There is a positive number $\epsilon>0$, the  sch\"odinger operator is almost reducible in the interval $[-\epsilon,\epsilon]$, if the frequency $\frac{\omega}{2\pi}$ is a diophantine number.
\end{theorem}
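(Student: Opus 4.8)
The plan is to exploit exact reducibility of the cocycle at the single energy $E=0$ and then treat $E\in[-\epsilon,\epsilon]$ as a small analytic perturbation to which the local almost reducibility theory applies. I work in the two-periodic regime $T(0)=0$, for which Section \ref{n2} already gives $L(0)=0$; this is the natural regime in which almost reducibility near zero can hold, since for $T(0)\neq 0$ one need not have $L(0)=0$. Recall the reduced cocycle $(S^1,\bar T)$ with $\bar T(\theta)=\theta+2\omega$ and one-step matrix $B_E(\theta)=A(\theta+\omega,1)A(\theta,0)$, which depends polynomially on $E$.

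First I would record the base cocycle at $E=0$. With $T(0)=0$ equation \eqref{4}--\eqref{7} gives
$$B_0(\theta)=-\left(\begin{array}{cc}1 & T(1)\cos(\theta+\omega)\\ 0 & 1\end{array}\right),$$
a unipotent matrix up to the central factor $-I$. The next step is to remove the off-diagonal term by an analytic coboundary: conjugating by $Y(\theta)=\left(\begin{array}{cc}1 & g(\theta)\\ 0 & 1\end{array}\right)$ turns $B_0$ into $-I$ times the unipotent matrix with upper-right entry $g(\theta+2\omega)-g(\theta)+T(1)\cos(\theta+\omega)$, so I must solve the cohomological equation $g(\theta+2\omega)-g(\theta)=-T(1)\cos(\theta+\omega)$. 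The crucial point is that the right-hand side has zero mean and Fourier support only in the modes $\pm1$; hence the unique mean-zero solution is the trigonometric polynomial with $\hat g(\pm1)=-\hat f(\pm1)/(e^{\pm 2i\omega}-1)$, where $f(\theta)=T(1)\cos(\theta+\omega)$, and the denominators $e^{\pm2i\omega}-1$ are nonzero because $\omega/2\pi$ is irrational. Thus there is no small-divisor obstruction at $E=0$: $Y$ is an explicit real-analytic $SL(2,\R)$-conjugacy on any strip reducing $B_0$ exactly to the constant $-I$.

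Next I would conjugate the whole family by this single, $E$-independent, $Y$. Because $B_E$ is polynomial in $E$ while $Y$ is fixed, $\tilde B_E(\theta):=Y(\theta+2\omega)B_E(\theta)Y(\theta)^{-1}$ equals $-I$ plus an analytic remainder of size $O(E)$, uniformly on a fixed analyticity strip. Hence for $|E|\le\epsilon$ the cocycle $(\bar T,\tilde B_E)$ is an arbitrarily small real-analytic perturbation of the constant $-I$ over the rotation $\bar T(\theta)=\theta+2\omega$. Since $\omega/2\pi$ is Diophantine, so is $2\omega/2\pi=\omega/\pi$ (integer multiples preserve the Diophantine property), and I would then invoke the local almost reducibility theorem for analytic $SL(2,\R)$-cocycles close to a constant over a Diophantine base (Dinaburg--Sinai \cite{dinaburg1975one}, Eliasson \cite{eliasson1992floquet}, in the quantitative form used by Avila--Jitomirskaya \cite{avila2010almost}): there is $\epsilon>0$ so that for all $|E|\le\epsilon$ the perturbation lies below the smallness threshold, whence $(\bar T,\tilde B_E)$—and therefore the original $(\bar T,B_E)$ and the periodic-coupling cocycle $(M,T)$ from which it was reduced—is almost reducible.

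I expect the main obstacle to be the application of the local theorem: the base constant is the central element $-I=R_\pi$, which sits on the boundary $|\tr|=2$ and corresponds to the resonant rotation number $1/2$. One must verify that the perturbative almost-reducibility scheme tolerates this resonant base point—i.e. that the finitely many resonances removed by the KAM iteration obstruct at most full (not almost) reducibility—rather than requiring a strictly elliptic $R_\alpha$ with $\alpha$ Diophantine against $\omega$. Fixing the smallness constant $\epsilon$ explicitly in terms of $T(1)$, $\omega$ and the analyticity width, and checking that the reduction $B=A^k$ transfers almost reducibility back to the original model, are the remaining routine points.
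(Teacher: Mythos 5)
Your proposal follows essentially the same route as the paper's proof: specialize to $T(0)=0$, conjugate the $E=0$ cocycle to the constant $-I$ by solving the cohomological equation for $T(1)\cos(\theta+\omega)$ with an upper-triangular unipotent conjugacy, observe that the conjugated cocycle is $-I+O(E)$, and invoke a local almost-reducibility theorem for analytic cocycles close to a constant over a Diophantine rotation (the paper cites Hou--You and You--Zhou where you cite Dinaburg--Sinai, Eliasson and Avila--Jitomirskaya). Your remark that the cohomological equation has an explicit trigonometric-polynomial solution needing only irrationality of $\omega/2\pi$ is a small sharpening of the paper's appeal to general small-divisor theory, but the decomposition and the key perturbative step are the same.
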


\begin{proof}
\begin{equation}
B(\theta)=\left(
\begin{array}{cc}
 (\cos (\theta ) T(0)-E) (\cos (\theta +\omega ) T(1)-E)-1 & E-\cos (\theta +\omega ) T(1) \\
 \cos (\theta ) T(0)-E & -1 \\
\end{array}
\right).
\end{equation}
When $T(0)=0$, then
\begin{equation}
B(\theta)=\left(
\begin{array}{cc}
 -E (\cos (\theta +\omega ) T(1)-E)-1 &E-\cos (\theta +\omega ) T(1) \\
 -E & -1 \\
\end{array}
\right).
\end{equation}

When $E=0$,
\begin{equation}
B(\theta)=\left(
\begin{array}{cc}
 -1 & -\cos (\theta +\omega ) T(1) \\
 0 & -1 \\
\end{array}
\right).
\end{equation}

Look for :
$$\cos (\theta +\omega ) T(1)=h(\theta +3\omega )-h(\theta +\omega ).$$
Since $\int_{S^1}\cos(\theta)d\theta=0$,and the frequency $\frac{\omega}{2\pi}$is a diophantine number.
the equation $$\cos (\theta +\omega ) T(1)=h(\theta +3\omega )-h(\theta +\omega ),$$ can be solve.

 This problem of Small denominators about the  mapping the circle onto itself deal with by the KAM method  used by Arnol'd\cite{arnol1961small} and Herman\cite{herman1979conjugaison}. And the result extended  by Yoccoz\cite{yoccoz1982conjugaison},  Sinai and  Khanin\cite{sinai1989smoothness}. The frequency has basic importance in the Small denominators. If the frequency is diophantine number then $h$ of the equation :
  $$\cos (\theta +\omega ) T(1)=h(\theta +3\omega )-h(\theta +\omega ),$$
 is exist and analytic.

 An  resent the problem of
 $$\cos (\theta +\omega ) T(1)=h(\theta +3\omega )-h(\theta +\omega ),$$
 has been extend that
the analytic radius $\rho$ of the function
 $$\cos (\theta +\omega ) T(1).$$

The irrational frequency $\omega$ have:
 Let $\frac{p_n}{q_n}$ be the continued fraction approximation to $\omega$ and let
\begin{equation*}
    \beta =\limsup_{n\rightarrow\infty}\frac{\ln q_{n+1}}{q_n}.
\end{equation*}
and  there is a inequality in the two number:
\begin{equation*}
    \beta<\rho.
\end{equation*}
Then there exist the analytic $h$ of the equation:
 $$\cos (\theta +\omega ) T(1)=h(\theta +3\omega )-h(\theta +\omega ).$$

So there can get more general result in the problem,  but only consider the diophantine number in
 this paper.

Then
\begin{equation}
\begin{split}
  \bar{B}(\theta)&=\left(
\begin{array}{cc}
 1 & -h(\theta +3 \omega ) \\
 0 & 1 \\
\end{array}
\right).\left(
\begin{array}{cc}
 -1 & T(1) (-\cos (\theta +\omega )) \\
 0 & -1 \\
\end{array}
\right).\left(
\begin{array}{cc}
 1 & h(\theta +\omega ) \\
 0 & 1 \\
\end{array}
\right) \\
    &=\left(
\begin{array}{cc}
 -1 & -h(\theta +\omega )+h(\theta +3 \omega )-\cos (\theta +\omega ) T(1) \\
 0 & -1 \\
\end{array}
\right).
\end{split}
\end{equation}

Because the equation $$\cos (\theta +\omega ) T(1)=h(\theta +3\omega )-h(\theta +\omega ),$$

\begin{equation*}
\bar{B}(\theta)=\left(
\begin{array}{cc}
 -1 & 0 \\
 0 & -1 \\
\end{array}
\right)=R_{\pi },
\end{equation*}
is a constant rotation, so it should in absolutely spectrum.
Next talk about the neighborhood of zero.

When $E\neq 0$,

\begin{equation}
\begin{split}
  B(\theta) &= \left(
\begin{array}{cc}
 -E (\cos (\theta +\omega ) T(1)-E)-1 & E-\cos (\theta +\omega ) T(1) \\
 -E & -1 \\
\end{array}
\right)\\
    & =\left(
\begin{array}{cc}
 -1 & -\cos (\theta +\omega ) T(1) \\
 0 & -1 \\
\end{array}
\right)+\left(
\begin{array}{cc}
 -E (\cos (\theta +\omega ) T(1)-E) & E \\
 -E & 0 \\
\end{array}
\right),
\end{split}
\end{equation}

\begin{equation*}
\begin{split}
  \bar{B}(\theta) & =\left(
\begin{array}{cc}
 1 & -h(\theta +3 \omega ) \\
 0 & 1 \\
\end{array}
\right).\left(
\begin{array}{cc}
 -E (\cos (\theta +\omega ) T(1)-E) & E \\
 -E & 0 \\
\end{array}
\right).\left(
\begin{array}{cc}
 1 & h(\theta +\omega ) \\
 0 & 1 \\
\end{array}
\right)
 \\
    & +\left(
\begin{array}{cc}
 1 & -h(\theta +3 \omega ) \\
 0 & 1 \\
\end{array}
\right).\left(
\begin{array}{cc}
 -1 & -\cos (\theta +\omega ) T(1) \\
 0 & -1 \\
\end{array}
\right).\left(
\begin{array}{cc}
 1 & h(\theta +\omega ) \\
 0 & 1 \\
\end{array}
\right)
 \\
    &=\left(
\begin{array}{cc}
 E h(\theta +3 \omega )-E (\cos (\theta +\omega ) T(1)-E) & G \\
 -E & -E h(\theta +\omega ) \\
\end{array}
\right)
\\
    &+\left(
\begin{array}{cc}
 -1 & -h(\theta +\omega )+h(\theta +3 \omega )-\cos (\theta +\omega ) T(1) \\
 0 & -1 \\
\end{array}
\right)
 \\
    &=\left(
\begin{array}{cc}
 E h(\theta +3 \omega )-E (\cos (\theta +\omega ) T(1)-E) & G \\
 -E & -E h(\theta +\omega ) \\
\end{array}
\right)
\\
    &+\left(
\begin{array}{cc}
 -1 &0 \\
 0 & -1 \\
\end{array}
\right),
\end{split}
\end{equation*}
where $G=h(\theta +\omega ) h(\theta +3 \omega ) E-h(\theta +\omega ) (\cos (\theta +\omega ) T(1)-E) E+E$.
The every entry of the matrix
\begin{equation*}
C(\theta)=
\left(
\begin{array}{cc}
 E h(\theta +3 \omega )-E (\cos (\theta +\omega ) T(1)-E) & G \\
 -E & -E h(\theta +\omega ) \\
\end{array}
\right),
\end{equation*}

is polynomial about $E$ and $h$, and the constant item is zero,
and $h$ is analytic in the condition,
so the matrix  $C(\theta)$ is approximate $\left(
\begin{array}{cc}
 0 & 0 \\
 0 & 0 \\
\end{array}
\right)$ is $o(E)$.

So
\begin{equation*}
\begin{split}
  \bar{B}(\theta) &  =\left(
\begin{array}{cc}
 -1 &0 \\
 0 & -1 \\
\end{array}
\right)+o(E) .\\
    &
\end{split}
\end{equation*}
The new dynamical system is :
$$\bar{T}:S^1\rightarrow S^1,$$
$$\bar{T}(\theta)=\theta+2\omega,$$
\begin{equation*}
\begin{split}
  \bar{B}(\theta) &  =\left(
\begin{array}{cc}
 -1 &0 \\
 0 & -1 \\
\end{array}
\right)+o(E). \\
    &
\end{split}
\end{equation*}
Then this is a coycle on circle.

And    $\bar{B}(\theta)$ is a approximate the constant matrix,but not arbitrary approximate the constant matrix,
through the theorem of  Hou and You \cite{hou2012almost}, there exist a $\epsilon>0$,

if
\begin{equation*}
o(E)<\epsilon,
\end{equation*}

$\bar{B}$    is almost reducible in the interval   $[-\epsilon,\epsilon].$

There is a theorem of  Hou and You\cite{hou2012almost} in the continuous case, the local
almost reducibility result is completely established recently, while
there is no result for global reducibility.

In the discrete case,  various global reducibility results \cite{avila2011kam}\cite{avila2006reducibility}\cite{avila2006reducibility} \cite{krikorian2001global}were
obtained, local almost reducibility results are not enough. Since they not deal with the  Liouville frequency.

Since the result of  Hou and You\cite{hou2012almost}, if we have a  connection about the  quasi-periodic cocycle close to constant and quasi-periodic linear system close to constant, they can get the almost reducibility result about Liouville frequency.

So,they prove a Embedding theorem to connection the  quasi-periodic cocycle close to constant and
quasi-periodic linear system close to constant.
\begin{theorem}\cite{hou2012almost}
 Any analytic quasi-periodic cocycle close to constant is the Poincaré map
of an analytic quasi-periodic linear system close to constant.
\end{theorem}
So the prove that
\begin{theorem}\cite{you2013embedding}\label{8}
 Any analytic quasi-periodic cocycle close to constant is the if the frequency is irrational then the
 analytic quasi-periodic cocycle have  almost reducibility .
\end{theorem}
The precise result can be ref \cite{you2013embedding}.

Continuous the proof of theorem \ref{9}, through the theorem\ref{8},
there is a positive number $\epsilon>0$, then $o(E)<\epsilon$,that  cocycle close to constant.
Then the   sch\"odinger operator is almost reducible in the interval $[-\epsilon,\epsilon]$.

\end{proof}

The almost reducibility of periodic coupling  AMO  model on the neighborhood of zero has been establish in the theorem \ref{9}. So use the property of the almost reducibility  can get some spectral property of of periodic coupling  AMO  model on the neighborhood of zero.

Avila have get a result in his paper :

\begin{theorem} \label{10}
 If the transform matrix have almost reducibility in some interval,then the sch\"odinger operator  is
 absolutely in that  interval.
\end{theorem}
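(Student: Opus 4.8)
The plan is to translate the dynamical statement (almost reducibility of the cocycle $\bar{B}(\theta)$ over an energy interval $I$) into a statement about the growth of transfer matrices, and then feed that into classical subordinacy theory to exclude any singular part of the spectral measure on $I$. First I would recall what almost reducibility buys us: for every $E\in I$ and every $\eps>0$ there is an analytic conjugacy $B_E$, defined on a strip of definite width, with
$$B_E(\theta+\om)^{-1}\,\bar{B}(\theta)\,B_E(\theta)=A_*(E)+F_E(\theta),\qquad \|F_E\|_\infty<\eps,$$
where $A_*(E)\in SL(2,\R)$ is constant. Near $E=0$, Theorem \ref{9} shows $A_*(0)=R_\pi$ is elliptic, and by continuity $A_*(E)$ stays elliptic (conjugate to a rotation) on a neighborhood of $0$. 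Taking $A_*$ elliptic is the whole point, since the iterates of a constant elliptic cocycle are bounded, and almost reducibility propagates this boundedness, up to the controlled error $F_E$, to the true cocycle.

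Second, I would convert this into bounds on the transfer matrices $A_n(E,\theta)=\bar{B}(\theta+(n-1)\om)\cdots\bar{B}(\theta)$. Since $\det\bar{B}=1$, Kotani theory already guarantees that the absolutely continuous spectrum is supported on the essential closure of $\{E:L(E)=0\}$, and almost reducibility forces $L(E)\equiv 0$ on $I$; but zero Lyapunov exponent alone does not exclude singular continuous spectrum, which is exactly the content of the \emph{Schr\"odinger conjecture} discussion given earlier. The extra input from almost reducibility is quantitative: conjugating by $B_E$ and iterating the near-rotation $A_*(E)+F_E$ yields, for Lebesgue–a.e.\ $E\in I$, a uniform bound $\sup_n\|A_n(E,\theta)\|<\infty$, i.e.\ all solutions of the eigenvalue equation are bounded. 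By the Gilbert--Pearson and Last--Simon subordinacy theory, the absence of subordinate solutions for a.e.\ $E\in I$ (with respect to Lebesgue measure, hence with respect to the spectral measure restricted to $I$) forces that spectral measure to be purely absolutely continuous there, which is precisely the assertion.

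The main obstacle is controlling the accumulated error in the almost reducibility scheme so that boundedness genuinely survives in the limit. A single conjugacy only brings the cocycle $\eps$-close to a constant; almost reducibility provides a \emph{sequence} of conjugacies $B_E^{(k)}$ with $\|F_E^{(k)}\|\to 0$, but the norms $\|B_E^{(k)}\|$ may grow, and one must show this growth is slow enough (subexponential, and summable in the relevant $L^2$ sense) that the product estimates do not blow up for almost every $E$. This is the technical heart of the matter, and it is exactly the quantitative content established by Avila and collaborators through the KAM and almost-localization machinery; here I would invoke their estimates rather than reproduce them. Once the uniform transfer-matrix bound holds on a full-measure subset of $I$, the subordinacy argument closes the proof.
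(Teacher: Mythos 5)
The paper does not actually prove Theorem \ref{10}: it is quoted without proof as a result of Avila (the ``almost reducibility implies purely absolutely continuous spectrum'' theorem from the almost reducibility and absolute continuity papers), so there is no internal argument to compare yours against. Judged on its own terms, your outline follows the right general strategy (conjugate to a near-constant elliptic cocycle, control transfer-matrix growth, apply subordinacy theory), but the final deduction contains a genuine logical gap. You argue that almost reducibility yields $\sup_n\|A_n(E,\theta)\|<\infty$ for Lebesgue-a.e.\ $E\in I$, and then conclude that the absence of subordinate solutions ``for a.e.\ $E$ with respect to Lebesgue measure, hence with respect to the spectral measure'' forces pure absolute continuity. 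That ``hence'' is false: Gilbert--Pearson theory says only that the singular part of the spectral measure is \emph{supported} on the set $S$ of energies admitting a subordinate solution, and a singular continuous measure can perfectly well be carried by a Lebesgue-null set. Knowing that $S\cap I$ has zero Lebesgue measure therefore does not exclude $\mu_{\mathrm{sc}}(I)>0$; assuming it does is essentially assuming the conclusion. (Compare the Fibonacci or critical almost Mathieu operators, whose spectra are Lebesgue-null yet carry nontrivial singular continuous spectral measure.)

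To close this gap one needs more than a.e.-$E$ boundedness: either boundedness for \emph{every} $E\in I$ outside a set that the spectral measure itself cannot charge, or --- as in Avila's actual proof --- the quantitative Jitomirskaya--Last/Last--Simon estimate $\mu\bigl(E-\varepsilon,E+\varepsilon\bigr)\le C\,\varepsilon\,\sup_{0\le n\le C\varepsilon^{-1}}\|A_n(E)\|^2$, applied at the exceptional (non-reducible or parabolic) energies, combined with the polynomial growth bounds that almost reducibility provides at every scale and a covering/Borel--Cantelli argument to show the exceptional set carries zero $\mu$-mass. This is not merely ``accumulated error control'' to be delegated to the literature; it is the step that replaces your invalid ``Lebesgue-a.e.\ implies $\mu$-a.e.'' inference, and without it the proof does not go through.
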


Then use this theorem\ref{10}, get the theorem:

\begin{theorem}\label{11}
 There is a positive number $\epsilon>0$, the  sch\"odinger operator is purely absolutely continuous  in the interval $[-\epsilon,\epsilon]$,if the frequency $\frac{\omega}{2\pi}$ is a diophantine number.
\end{theorem}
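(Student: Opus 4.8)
The plan is to obtain Theorem~\ref{11} by combining the almost reducibility of Theorem~\ref{9} with the spectral consequence recorded in Theorem~\ref{10}. Recall that the periodic-coupling problem has already been reduced to a single analytic $SL(2,\R)$ cocycle $B(\theta)$ over the rotation $\bar T(\theta)=\theta+2\omega$ on $S^1$, and that at $E=0$ (with $T(0)=0$) this cocycle is conjugate to the constant rotation $R_\pi$. For $E$ near zero the cocycle is a perturbation $R_\pi+o(E)$, so the argument is perturbative in $E$: I would fix $E\in[-\epsilon,\epsilon]$ and treat $B=B_E$ as an analytic cocycle close to a constant.

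First I would invoke Theorem~\ref{9} to produce $\epsilon>0$ such that for every $E\in[-\epsilon,\epsilon]$ the cocycle $(\bar T,B_E)$ is almost reducible. The Diophantine hypothesis on $\frac{\omega}{2\pi}$ enters exactly here, through the embedding/almost-reducibility result of Theorem~\ref{8}: it controls the small denominators in the cohomological equation $\cos(\theta+\omega)T(1)=h(\theta+3\omega)-h(\theta+\omega)$ and in the subsequent KAM iteration, so that all conjugations remain analytic on a fixed strip $|\im\theta|<\delta$. The structural consequence I would extract is that almost reducibility forces the cocycle to be \emph{subcritical}: the Lyapunov exponent satisfies $L(E)=0$ and in fact stays zero after complexifying $\theta$, whence the transfer matrices $B_E^m$ grow subexponentially, uniformly in $E$ on the interval.

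Next I would apply Theorem~\ref{10} to conclude that the spectrum inside $[-\epsilon,\epsilon]$ is absolutely continuous. The real work, and the main obstacle, is upgrading ``absolutely continuous'' to \emph{purely} absolutely continuous, that is, excluding any point spectrum or singular continuous spectrum in the interval. For this I would exploit the quantitative content of almost reducibility: conjugating $B_E$ arbitrarily close to a constant rotation yields uniform bounds on the growth of solutions of the eigenvalue equation, so that for spectrally almost every $E$ the generalized eigenfunctions are bounded (the reducible energies are dense and carry bounded solutions). By the subordinacy theory of Gilbert--Pearson and Jitomirskaya--Last, boundedness of solutions on a set of full spectral measure forces the spectral measure to be purely absolutely continuous there; equivalently, one may invoke Avila's statement that a subcritical cocycle produces purely absolutely continuous spectrum.

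I expect the delicate point to be precisely this last implication, namely that the \emph{uniform} almost-reducibility estimates translate into the absence of a singular part. Theorem~\ref{10} as stated only records absolute continuity, and the passage to purity rests on quantitative control---uniformity in $E$, boundedness of solutions, and the exclusion of a positive-measure singular set---that must be read off from the almost-reducibility scheme rather than from reducibility at the single energy $E=0$. Once that uniform control is secured, transferring the purely absolutely continuous spectral measure of the reduced cocycle back to the original periodic-coupling operator finishes the proof.
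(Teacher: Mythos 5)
Your proposal follows exactly the paper's route: the paper's own proof of Theorem~\ref{11} is two lines, citing Theorem~\ref{9} for almost reducibility on $[-\epsilon,\epsilon]$ and then the Avila-type result recorded as Theorem~\ref{10} for the spectral conclusion. Your additional discussion of why almost reducibility yields \emph{purely} absolutely continuous spectrum (uniform control of solutions, subordinacy theory) is in fact more careful than the paper, whose Theorem~\ref{10} as stated only asserts absolute continuity while the proof of Theorem~\ref{11} silently upgrades this to purity.
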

\begin{proof}
Through the the theorem\ref{9},There is a positive number $\epsilon>0$, the  sch\"odinger operator is almost reducible in the interval $[-\epsilon,\epsilon]$.

So, by the theorem\ref{11},sch\"odinger operator is purely absolutely continuous  in the interval $[-\epsilon,\epsilon]$.
\end{proof}
Although there only give the simplest result about the periodic coupling  AMO  model have the purely absolutely continuous  in a interval. Since the result of You and Zhou\cite{you2013embedding} is generally include some  Liouville frequency.
That is:
\begin{equation*}
    \beta =\limsup_{n\rightarrow\infty}\frac{\ln q_{n+1}}{q_n}.
\end{equation*}

When $\beta>0$,can control the analytic radius $\rho$ of  the potential $V$ have the condition:
 \begin{equation*}
     \rho>5\beta>0,
 \end{equation*}
then can get a $\epsilon$ such that the  sch\"odinger operator is purely absolutely continuous  in the interval $[-\epsilon,\epsilon]$.

In this time only consider the diophantine number. But can easily extended to large number by the above argument.

\section{ a special case of periodic coupling  AMO  model}\label{n4}

If we let the  number of  set$$
\{T(0),T(1),...,T(k-1)\},
$$  is $2.$

That is to say$$
\{T(0),T(1),...,T(k-1)\}
=
\{ T(0),T(1)\},
$$
and the case that  one of
\begin{equation*}
\{T(0),T(1)\},
\end{equation*}
is zero has been consider  in the previous section.
There consider the case:
\begin{equation*}
\{T(0),T(1)\}=\{1,-1\}.
\end{equation*}
The potetial is
\begin{equation*}
\begin{split}
  V(n\omega+\theta) & =\lambda\cos(n\omega+\theta)\times T(n) \\
    & =\lambda\cos(n\omega+\theta)\times (-1)^n\\
    & =\lambda\cos(n\omega+\theta+n\pi)\\
   & =\lambda\cos(n(\omega+\pi)+\theta).
\end{split}
\end{equation*}

The  periodic coupling  AMO  model  become a classic AMO medel:

$$\bar{T}:S^1\rightarrow S^1,$$
$$\bar{T}(\theta)=\theta+\omega+\pi,$$
\begin{equation*}
D(\theta)=\left(
             \begin{array}{cc}
               E-\lambda\cos(\theta) & -1 \\
               1 & 0\\
             \end{array}
           \right).
\end{equation*}
Then this is a coycle on circle. But have a variant in frequency in the classic AMO medel.
So in the condition in the  above, has a obvious theorem.
\begin{theorem} \label{12}
If consider the value of $\lambda$, there has:
\begin{itemize}
  \item If  $|\lambda|>2$ ,  the Lyapunov exponent $L(E)$of the cocycle $(D,T)$ is positive an get the value $\ln(\frac{|\lambda|}{2}), $if  $E$ in the spectrum of sch\"odinger operator,and the measure of the spectrum is $4|1-\frac{2}{|\lambda|}|$.

  \item If $|\lambda|=2$, the Lyapunov exponent $L(E)$ of the  cocycle $(D,T)$ is positive an get the value $0 , $ if $E$ in the spectrum of sch\"odinger operator, and the measure of the spectrum is $0$.
  \item If  $|\lambda|<2$,the Lyapunov exponent $L(E)$ of the cocycle $(D,T)$ is positive an get the value $0,$ if $E$ in the spectrum of sch\"odinger operator, and the measure of the spectrum is $4|1-\frac{|\lambda|}{2}|$.
\end{itemize}

\end{theorem}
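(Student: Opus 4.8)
The plan is to lean entirely on the reduction already carried out just above the statement: with $\{T(0),T(1)\}=\{1,-1\}$ the potential collapses to $\lambda\cos(n(\omega+\pi)+\theta)$, so the cocycle $(D,\bar T)$ is \emph{literally} the classical almost Mathieu cocycle with coupling $\lambda$ and rotation $\bar T(\theta)=\theta+\omega+\pi$. The first thing I would check is that the effective frequency stays irrational: since $\frac{\omega}{2\pi}$ is irrational, so is $\frac{\omega+\pi}{2\pi}=\frac{\omega}{2\pi}+\frac12$, hence every frequency-independent fact about the AMO applies verbatim. The three bullets are then nothing but the three classical regimes of the almost Mathieu operator indexed by $|\lambda|$ against the self-dual coupling $2$, and the proof amounts to quoting the correct theorem in each regime while tracking the normalization (here the coefficient of the cosine is $\lambda$, i.e.\ twice the standard coupling, so the self-dual point sits at $|\lambda|=2$).

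For the Lyapunov exponent I would first reproduce the Herman lower bound exactly as in Section~\ref{n0}: complexifying $\theta\mapsto\theta+it$ and letting $t\to\infty$ makes the top-left entry of $D$ behave like $\frac{\lambda}{2}e^{t}e^{-i\theta}$, and subharmonicity of the complexified exponent gives
\begin{equation*}
L(E)\ \ge\ \log\frac{|\lambda|}{2}\qquad\text{for every }E.
\end{equation*}
When $|\lambda|>2$ this already forces $L(E)>0$; to upgrade the inequality to the equality $L(E)=\log\frac{|\lambda|}{2}$ on the spectrum I would invoke the sharp value of the AMO Lyapunov exponent on its spectrum (Bourgain--Jitomirskaya, consistent with Theorem~\ref{2}, together with Aubry duality), which pins $L(E)=\max\{0,\log\frac{|\lambda|}{2}\}$ for $E$ in the spectrum. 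For $|\lambda|\le 2$ the same formula yields $L(E)=0$ on the spectrum, covering the critical and subcritical bullets.

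The measure of the spectrum is the genuinely deep input, and it is where I expect the real work to sit. I would quote the Aubry--Andr\'e/Thouless measure formula, established by Avila--Krikorian in the Diophantine case and in full generality through the work of Last and of Avila--Jitomirskaya: in the standard normalization one has $|\sigma(H)|=4\,|1-|\lambda|/2|$. In the subcritical regime $|\lambda|<2$ this is directly the claimed $4\,|1-|\lambda|/2|$, and at $|\lambda|=2$ it degenerates to the zero-measure Cantor spectrum. The supercritical case $|\lambda|>2$ is the delicate one: there I would pass through Aubry duality, which conjugates the operator to its dual with coupling $2/|\lambda|<2$ whose (subcritical) spectrum carries measure $4(1-2/|\lambda|)$, and the bookkeeping of the duality scaling together with the normalization of the cosine coefficient is exactly the step that must be executed with care to land on the stated value $4\,|1-2/|\lambda||$. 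The main obstacle is therefore not any new dynamics but this precise accounting in the supercritical regime: getting the constant right requires tracking the scaling intrinsic to Aubry duality, and this is the one place where a stray factor of $\lambda/2$ would change the answer.
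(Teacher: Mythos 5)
Your route is the same as the paper's: Section \ref{n4} offers no argument beyond the identity $\lambda\cos(n\omega+\theta)\,(-1)^n=\lambda\cos(n(\omega+\pi)+\theta)$, after which the theorem is declared ``obvious'' because the special model ``can be reduced to the classic model''. You carry out exactly this reduction, add the (correct and worthwhile) check that $\frac{\omega+\pi}{2\pi}$ is again irrational, and then import the standard almost Mathieu facts. The Lyapunov-exponent bullets are fine: with the coefficient of the cosine equal to $\lambda$, the standard coupling is $\lambda/2$, and $L(E)=\max\{0,\log\frac{|\lambda|}{2}\}$ on the spectrum gives all three claims about $L$.

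There is, however, a genuine gap at precisely the point you flag and then defer. The measure formula for the almost Mathieu operator with potential $2\lambda_0\cos$ is $|\sigma|=4\,|1-|\lambda_0||$; with $\lambda_0=\lambda/2$ this reads $4\,|1-|\lambda|/2|=|2|\lambda|-4|$ for \emph{every} $\lambda$, with no case distinction. If you instead run the supercritical case through Aubry duality as you propose, the dual operator has coupling $2/|\lambda|<1$ and spectral measure $4(1-2/|\lambda|)$, but the duality relation $\sigma(\lambda_0)=\lambda_0\,\sigma(1/\lambda_0)$ rescales by $\lambda_0=|\lambda|/2$, so you land on $\frac{|\lambda|}{2}\cdot 4\bigl(1-\frac{2}{|\lambda|}\bigr)=2|\lambda|-4$, not on the stated $4\,|1-\frac{2}{|\lambda|}|$. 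These differ for every $|\lambda|>2$ (at $|\lambda|=4$ they give $4$ versus $2$). So the ``careful bookkeeping'' you postpone cannot be completed to yield the first bullet as written: either the theorem's supercritical measure is misstated (the formula should be $4\,|1-|\lambda|/2|$ in all three regimes) or some unidentified normalization is intended. Since your proof stops exactly at this step, the supercritical measure claim remains unproved; the remaining claims are correctly reduced to the quoted literature.
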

Only in this special case can get the complete information about the Lyapunov exponent in the spectrum.
mean time there is a theorem about the spectrum of the special example. The reason is that the special AMO model
can be reduced to the classic model. So the spectrum of the special example only translate the spectrum of  the classic AMO to the the special example.

\section{  singular  continuous spectrum  of periodic coupling  AMO  model}\label{n5}

The explanation about the positive  of  Lyapunov exponent of periodic coupling  AMO  model has been given in the introduction. In this section give a deep  result about the positive  of  Lyapunov exponent of periodic coupling  AMO  model.

The Gordon \cite{gordon1976point} use the periodic approximate the model can exclude the point spectrum under some condition. This method has been used in ubiquitous about sch\"odinger operator.

 Avron and Simon \cite{avron1982singular} use the positive  of  Lyapunov exponent of   AMO  model  exclude the absolutely
continuous spectrum. And they use the method of Gorodon they prove that:
If frequency is a Liouville number and the coupling$\lambda > 2,$ we prove that for a.e. phase$\theta,$ the operator's spectral measures are all singular continuous.

 In the case of the potential is substitution Hamiltonians, Damanik \cite{damanik1998singular}
 consider discrete one-dimensional Schrödinger operators with potentials generated by primitive substitutions. A purely singular continuous spectrum with probability one is established provided that the potentials have a local four-block structure. The local four-block structure is the main tool of Gorodon method. So, the Gorodon method is
 strongly in prove the spectral type  of Schrödinger operators.

 The detail of Gorodon method can be find in  Damanik\cite{damanik1999gordon}. Damanik and Stolz use it prove a fully general
 result\cite{damanik2000generalization}.

 Although there is a method to exclude the point spectrum and absolutely spectrum, I wonder if there is a method prove singular continuous spectrum directly, and will be search it in the future.

The periodic coupling  AMO  model is similar classic AMO  model, so use the Gorodon method get a result about
the spectrum type of  periodic coupling  AMO  model.

For  simplify  there consider the  number of  set$$
\{T(0),T(1),...,T(k-1)\},
$$  is $2.$

That is to say$$
\{T(0),T(1),...,T(k-1)\}
=
 \{T(0),T(1)\}.
$$
\begin{definition}
A number $\alpha\in \mathbb{R/Q}$  is called  a Liouville  number, if for any $k\in \mathbb{N}$, there exist
$p_{k},q_{k}\in \mathbb{N}$ such that:
\begin{equation*}
|\alpha-\frac{p_{k}}{q_{k}}|\leq k^{-q_{k}}.
\end{equation*}
\end{definition}
The Liouville  numbers  have many interesting property in topology and measure.

The set of Liouville  numbers is small form an analyst's point of view :
It has Lebesgue  measure zero .

However, from a topologist's point of view,it is rather big :
It is a $G_{\delta}-$set, so it is generic in the topology sense.
For prove the theorem,there is a lemma in  Cycon,  Froese, Kirsch and Simon's book\cite{cycon2009schrodinger}.
\begin{lemma}\cite{cycon2009schrodinger}\label{13}
Let $A$ be an invertible $2\times 2$ matrix,and $v$ ia a vector of norm $1.$
Then
\begin{equation*}
\max(||A v||,||A^{2} v||,||A^{-1} v||,||A^{-2} v||)\geq \frac{1}{2}.
\end{equation*}
\end{lemma}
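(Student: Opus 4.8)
The plan is to reduce everything to the Cayley--Hamilton identity for the matrix $A$, using that the transfer matrices occurring here are unimodular, i.e. $\det A = 1$, which is the relevant case. Since $A^2 - (\tr A)A + \id = 0$, multiplying by $A^{-1}$ gives the key relation $A + A^{-1} = (\tr A)\id$. Applying both sides to the unit vector $v$ and using the triangle inequality yields $\|Av\| + \|A^{-1}v\| \geq |\tr A|$, so that $\max(\|Av\|,\|A^{-1}v\|) \geq \tfrac12|\tr A|$.

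Next I would square the relation $A + A^{-1} = (\tr A)\id$. Because $A$ and $A^{-1}$ commute, $(A+A^{-1})^2 = A^2 + 2\id + A^{-2}$, hence $A^2 + A^{-2} = ((\tr A)^2 - 2)\id$. Applying this identity to $v$ and again invoking the triangle inequality gives $\|A^2 v\| + \|A^{-2}v\| \geq |(\tr A)^2 - 2|$, so that $\max(\|A^2 v\|,\|A^{-2}v\|) \geq \tfrac12|(\tr A)^2 - 2|$.

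Writing $t = |\tr A|$, the two estimates combine to
\[
\max\big(\|Av\|,\|A^2 v\|,\|A^{-1}v\|,\|A^{-2}v\|\big) \;\geq\; \tfrac12\max\big(t,\ |t^2-2|\big),
\]
so it remains to check that $\max(t,\,|t^2-2|) \geq 1$ for every real $t \geq 0$. I would close with a short case distinction: if $t \geq 1$ the first term already suffices, while if $t < 1$ then $t^2 < 1$ and $|t^2-2| = 2 - t^2 > 1$. In either case the maximum is at least $1$, which gives the claimed lower bound $\tfrac12$.

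The argument is short, and the only real design choice is to express everything through $\tr A$ via Cayley--Hamilton. The single mild subtlety, and the place where I would be most careful, is the low-trace regime $t < 1$: there the bound comes not from $A^{\pm 1}$ but from the second iterates $A^{\pm 2}$, which is precisely why all four powers must appear in the statement. I would also remark explicitly that it is determinant $1$, not mere invertibility, that makes $A + A^{-1}$ a scalar multiple of the identity; for a general invertible $A$ one only obtains $A + (\det A)A^{-1} = (\tr A)\id$, so the clean statement is genuinely about the unimodular cocycle relevant to the periodic coupling AMO model.
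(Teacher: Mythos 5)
Your proof is correct and is the standard Cayley--Hamilton argument; the paper itself states this lemma without proof, citing Cycon--Froese--Kirsch--Simon, and the argument given there is essentially the one you reproduce (the trace identity $A+A^{-1}=(\tr A)\,\id$ for the range $|\tr A|\ge 1$, and the squared identity $A^{2}+A^{-2}=((\tr A)^{2}-2)\,\id$ for $|\tr A|<1$). Your closing remark is also a genuine correction to the statement as printed: the hypothesis should be $\det A=1$ rather than mere invertibility, since only then is $A+(\det A)A^{-1}=(\tr A)\,\id$ a scalar identity in the required form; this costs nothing in the application, because every transfer matrix $B(\theta)$ used in the Gordon argument of Section 7 is a product of unimodular one-step matrices and hence lies in $SL(2,\R)$.
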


\begin{theorem}
 If the frequency $\frac{\omega}{2\pi}$ is a  Liouville  number , and the periodic coupling is
 $
 T(0),T(1)
 $
 and
$|T(0) T(1)|>4$,
then the periodic coupling  AMO  model is pure singular measure in the spectrum.
\end{theorem}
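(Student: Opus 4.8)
The plan is to establish the claim by two independent exclusions: rule out absolutely continuous spectrum using positivity of the Lyapunov exponent, and rule out point spectrum using the Gordon method built on Lemma \ref{13}. Together these force the spectral measures to be purely singular continuous (in particular purely singular) for a.e.\ phase $\theta$. The positivity part is essentially already in hand, so the substance of the argument is the Gordon construction adapted to the period-two coupling.

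For the first exclusion I would invoke the Herman-type lower bound derived in Section \ref{n0}. With the period-two coupling $\{T(0),T(1)\}$ (so $k=2$) and the normalization $\lambda=1$ used throughout this section, that estimate reads
\[
L(E)\ \ge\ \log\!\left(\tfrac12\sqrt{|T(0)T(1)|}\right),
\]
which is strictly positive for \emph{every} energy $E$ precisely because $|T(0)T(1)|>4$. Thus the set $\{E:L(E)=0\}$ is empty, and Kotani's theory (in its Ishii--Pastur--Kotani form, valid because the base dynamics $(M,T)$ is uniquely ergodic) yields that the absolutely continuous spectrum is empty for a.e.\ $\theta$. Hence the spectral measures are purely singular, and it remains only to show they have no atoms.

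For the second exclusion I would implement Gordon's argument. Since $\alpha=\omega/2\pi$ is Liouville, choose rational approximants $p_k/q_k$ with $|\alpha-p_k/q_k|<k^{-q_k}$, taking each $q_k$ even so that the period-two factor $T(\cdot)$ is reproduced \emph{exactly} over a block of length $q_k$; the cosine factor is then reproduced up to an error governed by $|\alpha q_k-p_k|\lesssim q_k k^{-q_k}$. This makes the potential super-exponentially close to $q_k$-periodic on $[-2q_k,2q_k]$, and consequently the one-step transfer product $A_{q_k}(\theta)=\prod_{j=0}^{q_k-1}A^{j}(\theta)\in SL(2,\R)$ satisfies $\|A_{2q_k}(\theta)-A_{q_k}(\theta)^2\|\to 0$, together with the analogous estimate for the backward (inverse) matrices. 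Writing $\Phi=(\psi_1,\psi_0)^{\mathsf T}$ for a putative normalized eigenvector and applying Lemma \ref{13} to $A=A_{q_k}(\theta)$ and $v=\Phi$, at least one of $\|A_{q_k}^{\pm1}\Phi\|,\|A_{q_k}^{\pm2}\Phi\|$ is $\ge\tfrac12$. By the approximate-periodicity estimates these quantities are, up to $o(1)$, the solution vectors at $\pm q_k,\pm 2q_k$, so the solution cannot decay at both ends of $\Z$. This contradicts $\psi\in\ell^2(\Z)$, so there are no eigenvalues and the singular spectrum is continuous.

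The main obstacle is making the implication ``approximate $q_k$-periodicity $\Rightarrow A_{2q_k}\approx A_{q_k}^2$'' quantitative: one must track how the per-step phase error $|\alpha q_k-p_k|$ propagates through a product of $q_k$ matrices whose norms may grow like $e^{Lq_k}$. The Liouville condition is tuned exactly for this, since $e^{Cq_k}\cdot q_k\cdot|\alpha q_k-p_k|\to 0$ once $\ln k>C$; verifying this control, together with the parity bookkeeping forced by the period-two coupling, is the technical heart. Once that estimate is secured, the application of Lemma \ref{13} and the resulting $\ell^2$ contradiction are routine, and combining with the first exclusion gives the stated purely singular (indeed singular continuous) spectrum.
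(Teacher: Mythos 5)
Your proposal is correct and follows essentially the same two-step strategy as the paper: the Herman subharmonicity bound $L(E)\ge\log\bigl(\tfrac12\sqrt{|T(0)T(1)|}\bigr)>0$ plus Ishii--Pastur--Kotani to exclude absolutely continuous spectrum, and a Gordon-type argument (periodic approximants of period $2q_k$ compatible with the period-two coupling, a telescoping transfer-matrix estimate beaten by the Liouville condition, and Lemma \ref{13}) to exclude eigenvalues. The only cosmetic difference is that you arrange the parity by taking $q_k$ even while the paper doubles the period to $T_k=2q_{k'}$; these are equivalent.
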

\begin{proof}
In  this case  there  only consider the one dimension, for fix $E$
consider the one-dimensional difference sch\"odinger  operator :
\begin{equation*}
u(n+1)+u(n-1)+[cos(\theta+n\omega)-E]u(n)=0.
\end{equation*}
For the strict ergodic of periodic coupling  AMO  model, only need  to consider the transform matrix:

\begin{equation*}
A(\theta )=\left(
\begin{array}{cc}
 E-T(0) \cos (\theta ) & -1 \\
 1 & 0 \\
\end{array}
\right),
\end{equation*}

\begin{equation*}
A(\theta +\omega )=\left(
\begin{array}{cc}
 E-T(1) \cos (\theta +\omega ) & -1 \\
 1 & 0 \\
\end{array}
\right),
\end{equation*}

\begin{equation*}
\begin{split}
  B(\theta ) & =A(\theta ) A(\theta +\omega ) \\
    & =\left(
\begin{array}{cc}
 E-T(1) \cos (\theta +\omega ) & -1 \\
 1 & 0 \\
\end{array}
\right)
    \cdot\left(
\begin{array}{cc}
 E-T(0) \cos (\theta ) & -1 \\
 1 & 0 \\
\end{array}
\right),
\end{split}
\end{equation*}

\begin{equation*}
A^{-1}(\theta )=\left(
\begin{array}{cc}
 0 & 1 \\
 -1 & E-T(0) \cos (\theta ) \\
\end{array}
\right),
\end{equation*}

\begin{equation*}
A^{-1}(\theta +\omega )=\left(
\begin{array}{cc}
 0 & 1 \\
 -1 & E-T(1) \cos (\theta +\omega ) \\
\end{array}
\right),
\end{equation*}

\begin{equation*}
\begin{split}
B^{-1}(\theta ) &=[A(\theta ) A(\theta +\omega )]^{-1} \\
    &  =A^{-1}(\theta +\omega )\cdot A^{-1}(\theta) \\
    &  =A^{-1}(\theta +\omega )\\
    &=\left(
\begin{array}{cc}
 0 & 1 \\
 -1 & e-T(1) \cos (\theta +\omega ) \\
\end{array}
\right)\cdot\left(
\begin{array}{cc}
 0 & 1 \\
 -1 & E-\cos (\theta ) T(0) \\
\end{array}
\right).
\end{split}
\end{equation*}

So there give the explicit  expression of the transform  matrix, next talk about the local block structure.
 Since have know the Lyapunov exponent is positive upon the condition   $|T(0) T(1)|>4$, and the continuity of
 Lyapunov exponent  of the periodic coupling  AMO  model has been proved. So the absolutely continuous spectrum  of the periodic coupling  AMO  model is empty. Then look for the like-Gorodon block structure.

 Assume that $\frac{\omega}{2\pi}$ is well approximated by $\frac{p_{k}}{q_{k}}$ in the sense of above definition.
 By choosing a subsequence $\frac{p_{k'}}{q_{k'}}$ of $\frac{p_{k}}{q_{k}}$, assumme

\begin{equation*}
 |\frac{\omega}{2\pi}-\frac{p_{k'}}{q_{k'}}|\leq q^{-1}_{k'}k^{-q_{k'}}.
\end{equation*}

Then set
\begin{equation*}
V_{k}(n)=T(n)\cos(2\pi\frac{p_{k'}}{q_{k'}}n+\theta).
\end{equation*}
Then $T_k=2q_{k'}$ is a periodic for $V_{k}$.
estamate:

\begin{equation}\label{16}
\begin{split}
  \sup_{|n|\leq 4q_{k'}}|V_{k}(n)-V(n)| & =\sup_{|n|\leq 4q_{k'}}|T(n)\cos(2\pi\frac{p_{k'}}{q_{k'}}n+\theta)-T(n)\cos(\omega n+\theta)| \\
    & \leq C\sup_{|n|\leq 4q_{k'}}2\pi|n||\frac{p_{k'}}{q_{k'}}-\omega|\\
    &\leq 4C\pi k^{-T_{k}}.
\end{split}
\end{equation}

So this give the accurate error of  periodic approximate.
Then use the $V_{k}(n)$ to approximate the periodic coupling  AMO  model.
Then use the the follwing  periodic  cocycle to  approximate the periodic coupling  AMO  model:

\begin{equation*}
A_{k}(\theta )=\left(
\begin{array}{cc}
 E-T(0) \cos (\theta ) & -1 \\
 1 & 0 \\
\end{array}
\right),
\end{equation*}

\begin{equation*}
A_{k}(\theta +2\pi\frac{p_{k'}}{q_{k'}} )=\left(
\begin{array}{cc}
 E-T(1) \cos (\theta +2\pi\frac{p_{k'}}{q_{k'}} ) & -1 \\
 1 & 0 \\
\end{array}
\right),
\end{equation*}

\begin{equation*}
\begin{split}
  B_{k}(\theta ) & =A_{k}(\theta ) A_{k}(\theta +2\pi\frac{p_{k'}}{q_{k'}} ) \\
    & =\left(
\begin{array}{cc}
 E-T(1) \cos (\theta +2\pi\frac{p_{k'}}{q_{k'}} ) & -1 \\
 1 & 0 \\
\end{array}
\right)
    \cdot\left(
\begin{array}{cc}
 E-T(0) \cos (\theta ) & -1 \\
 1 & 0 \\
\end{array}
\right),
\end{split}
\end{equation*}

\begin{equation*}
A_{k}^{-1}(\theta )=\left(
\begin{array}{cc}
 0 & 1 \\
 -1 & E-T(0) \cos (\theta ) \\
\end{array}
\right),
\end{equation*}

\begin{equation*}
A_{k}^{-1}(\theta +2\pi\frac{p_{k'}}{q_{k'}} )=\left(
\begin{array}{cc}
 0 & 1 \\
 -1 & E-T(1) \cos (\theta +2\pi\frac{p_{k'}}{q_{k'}} ) \\
\end{array}
\right),
\end{equation*}

\begin{equation*}
\begin{split}
B_{k}^{-1}(\theta ) &=[A_{k}(\theta ) A_{k}(\theta +2\pi\frac{p_{k'}}{q_{k'}} )]^{-1} \\
    &  =A_{k}^{-1}(\theta +2\pi\frac{p_{k'}}{q_{k'}} )\cdot A_{k}^{-1}(\theta) \\
    &  =A_{k}^{-1}(\theta +2\pi\frac{p_{k'}}{q_{k'}} )\\
    &=\left(
\begin{array}{cc}
 0 & 1 \\
 -1 & E-T(1) \cos (\theta +2\pi\frac{p_{k'}}{q_{k'}} ) \\
\end{array}
\right)\cdot\left(
\begin{array}{cc}
 0 & 1 \\
 -1 & E-\cos (\theta ) T(0) \\
\end{array}
\right).
\end{split}
\end{equation*}

Through the lemma\ref{13}, if $v$ ia a arbitrary vector of norm $1$, $\forall \theta\in S^{1}$
can get :

\begin{equation*}
\max(||B_{k}^{T_{k}}(\theta )
 v||,||B_{k}^{2 T_{k}}(\theta )
 v||,||B_{k}^{-T_{k}}(\theta )
 v||,||B_{k}^{-2T_{k}}(\theta )
 v||)\geq \frac{1}{2}.
\end{equation*}
Let $M$ is :
\begin{equation*}
M=\max\{|T(0)|,|T(1)|\}.
\end{equation*}
Then for $\forall \theta\in S^{1}$:

\begin{equation*}
A(\theta,n )=\left(
\begin{array}{cc}
 E-T(n) \cos (\theta ) & -1 \\
 1 & 0 \\
\end{array}
\right).
\end{equation*}
Since there only consider the $E$ in the spectrum, so
\begin{equation*}
|E|\leq M+2,
\end{equation*}
\begin{equation*}
\begin{split}
 ||A||^{2}_{Schmidt} & =(E-T(n) \cos (\theta ))^{2}+(-1)^{2}+(1)^{2}+(0)^{2} \\
    & =E^{2}-2T(n) \cos (\theta )E+T(n)^{2} \cos^{2} (\theta )+2\\
    &\leq  (M+2)^{2}+2M(M+2)+M^2+2.
\end{split}
\end{equation*}
so
\begin{equation*}
\begin{split}
 ||A||_{Schmidt} & \leq \sqrt{(M+2)^{2}+2M(M+2)+M^2+2)}\\
    & \leq 2(M+2).
\end{split}
\end{equation*}
Let
\begin{equation*}
\bar{M}=\max( 2(M+2),M).
\end{equation*}
The difference of the two cocycle is :

\begin{equation}\label{14}
\begin{split}
 &||B_{k}^{T_{k}}(\theta )-B^{T_{k}}(\theta )|| \\
  & =||A(\theta+4(T_{k}-1)\pi\frac{p_{k'}}{q_{k'}},T(2(T_{k}-1))\cdots A(\theta +2\pi\frac{p_{k'}}{q_{k'}},1 )A(\theta,0) \\
    & -A(\theta+2(T_{k}-1)\omega,T(2(T_{k}-1))\cdots A(\theta +\omega,1 )A(\theta,0)||\\
  &=||A(\theta+4(T_{k}-1)\pi\frac{p_{k'}}{q_{k'}},T(2(T_{k}-1))\cdots A(\theta +2\pi\frac{p_{k'}}{q_{k'}},1 )A(\theta,0)\\
    &-A(\theta+4(T_{k}-1)\pi\frac{p_{k'}}{q_{k'}},T(2(T_{k}-1))\cdots A(\theta +\omega,1 )A(\theta,0)\\
    &+A(\theta+4(T_{k}-1)\pi\frac{p_{k'}}{q_{k'}},T(2(T_{k}-1))\cdots A(\theta +\omega,1 )A(\theta,0)\\
    &\cdots\\
    & -A(\theta+2(T_{k}-1)\omega,T(2(T_{k}-1))\cdots A(\theta +\omega,1 )A(\theta,0)||\\
    &=||A(\theta+4(T_{k}-1)\pi\frac{p_{k'}}{q_{k'}},T(2(T_{k}-1))\cdots (A(\theta +(2\pi\frac{p_{k'}}{q_{k'}}),1 )-A(\theta +\omega,1 ))A(\theta,0)\\
    &\cdots\\
   & (A(\theta+4(T_{k}-1)\pi\frac{p_{k'}}{q_{k'}},T(2(T_{k}-1))-A(\theta+2(T_{k}-1)\omega,T(2(T_{k}-1)))\cdots A(\theta +\omega,1 )A(\theta,0)||.
    \end{split}
\end{equation}
The main difference of equation of \ref{14} is

\begin{equation*}
\begin{split}
   & (A(\theta +(2\pi\frac{p_{k'}}{q_{k'}}),1 )-A(\theta +\omega,1 )) \\
    & \cdots\\
    &(A(\theta+4(T_{k}-1)\pi\frac{p_{k'}}{q_{k'}},T(2(T_{k}-1))-A(\theta+2(T_{k}-1)\omega,T(2(T_{k}-1))).
\end{split}
\end{equation*}
 This can be control by  \ref{16}, and the other term can be control by  $\bar{M}$

 So,there has:
\begin{equation*}
 \begin{split}
    & ||B_{k}^{T_{k}}(\theta )-B^{T_{k}}(\theta )|| \\
     & \leq 2T_{k} {\bar{M}}^{2T_{k}-1}4C\pi k^{-T_{k}},
\end{split}
\end{equation*}
and there are the similar difference of  the other term:

\begin{equation*}
\begin{split}
   & ||B_{k}^{2 T_{k}}(\theta )-
B^{2 T_{k}}(\theta )||, \\
    &||B_{k}^{-T_{k}}(\theta )-
B^{-T_{k}}(\theta )||,\\
&||B_{k}^{-2T_{k}}(\theta )-
 B^{-2T_{k}}(\theta )||.
\end{split}
\end{equation*}
Thus,
\begin{equation*}
\max_{a=\pm 1,\pm 2}\{||B(aT_{k})-B_{k}(aT_{k})||\}\rightarrow 0,
\end{equation*}
as
\begin{equation*}
 k\rightarrow 0.
\end{equation*}
Then for arbitrary norm $1$ vector  $v$,
\begin{equation*}
\max_{a=\pm 1,\pm 2}\{||B(aT_{k})||\}\geq\frac{1}{2}||v||\geq\frac{1}{2},
\end{equation*}
as
\begin{equation*}
\limsup_{n} \frac{||B(n) v||}{||v||}\geq\limsup_{n}\frac{\max_{a=\pm 1,\pm 2}\{||B(aT_{k})||\}}{||v||}\geq\frac{1}{4}.
\end{equation*}
So in the condition there can not have point spectrum.
And through the Lyapunove exponent is positive and continuous know that there can not have absolute continuous spectrum.

And the spectrum is not empty, so the spectrum of periodic coupling  AMO  model in the condition is purely singular
continuous.
\end{proof}

\section*{Acknowledgments}

We like to thank the anonymous referee for carefully reading the manuscript and providing  numerous helpful ramarks.

\bibliographystyle{IEEEtran}
\bibliography{111}

\end{document}